\definecolor{luh-dark-blue}{rgb}{0.0, 0.313, 0.608}
\definecolor{luh-light-blue}{rgb}{0.6, 0.725, 0.847}
\definecolor{lightblue}{RGB}{160,220,255}
\numberwithin{equation}{section}
\newtheoremstyle{thmlemcorr}{10pt}{10pt}{\itshape}{}{\bfseries}{.}{10pt}{{\thmname{#1}\thmnumber{ #2}\thmnote{ (#3)}}}
\newtheoremstyle{thmlemcorr*}{10pt}{10pt}{\itshape}{}{\bfseries}{.}\newline{{\thmname{#1}\thmnumber{ #2}\thmnote{ (#3)}}}
\newtheoremstyle{remexample}{10pt}{10pt}{}{}{\bfseries}{.}{10pt}{{\thmname{#1}\thmnumber{ #2}\thmnote{ (#3)}}}
\newtheoremstyle{ass}{10pt}{10pt}{}{}{\bfseries}{.}{10pt}{{\thmname{#1}\thmnumber{ A#2}\thmnote{ (#3)}}}
\theoremstyle{thmlemcorr}
\newtheorem{theorem}{Theorem}
\numberwithin{theorem}{section}
\newtheorem{lemma}[theorem]{Lemma}
\newtheorem{corollary}[theorem]{Corollary}
\newtheorem{proposition}[theorem]{Proposition}
\theoremstyle{thmlemcorr*}
\newtheorem*{theorem*}{Theorem}
\newtheorem{lemma*}[theorem]{Lemma}
\newtheorem{corollary*}[theorem]{Corollary}
\newtheorem{proposition*}[theorem]{Proposition}
\newtheorem{problem*}[theorem]{Problem}
\newtheorem{conjecture*}[theorem]{Conjecture}
\newtheorem{definition*}[theorem]{Definition}
\newtheorem{assumption*}[theorem]{Assumption}
\theoremstyle{remexample}
\newtheorem{remark}[theorem]{Remark}
\theoremstyle{ass}
\DeclareMathOperator{\diverg}{div}
\newcommand{\dd}{\;\mathrm{d}}
\newcommand{\N}{\mathbb{N}}
\newcommand{\R}{\mathbb{R}}
\newcommand{\eps}{\varepsilon}
\def\XXint#1#2#3{{\setbox0=\hbox{$#1{#2#3}{\int}$}
\vcenter{\hbox{$#2#3$}}\kern-.5\wd0}}
\renewcommand{\eps}{\varepsilon}
\renewcommand{\phi}{\varphi}
\begin{document}


\title[]{Bernis estimates for higher-dimensional doubly-degenerate non-Newtonian thin-film equations}

\author{Christina Lienstromberg}
\address{\textit{Christina Lienstromberg:}  Institute of Analysis, Dynamics and Modeling, University of Stuttgart, Pfaffenwaldring~57, 70569 Stuttgart, Germany}
\email{christina.lienstromberg@mathematik.uni-stuttgart.de}

\author{Katerina Nik}
\address{\textit{Katerina Nik:} King Abdullah University of Science and Technology (KAUST), CEMSE Division, Thuwal 23955-6900, Saudi Arabia}
\email{katerina.nik@kaust.edu.sa}


\begin{abstract}
For the doubly-degenerate parabolic non-Newtonian thin-film equation
\begin{equation*}
    u_t + \diverg\bigl(u^n |\nabla \Delta u|^{p-2} \nabla \Delta u\bigr) = 0,
\end{equation*}
we derive (local versions) of Bernis estimates of the form
 \begin{equation*}
        \int_{\Omega}  u^{n-2p} |\nabla u|^{3p} \dd x  
        + 
        \int_{\Omega}  u^{n-\frac{p}{2}} |\Delta u|^{\frac{3p}{2}} \dd x 
        \leq c(n,p,d) \int_{\Omega}  u^n|\nabla \Delta u|^p \dd x,
\end{equation*}
for functions $u \in W^2_p(\Omega)$ with Neumann boundary condition, where $2 \leq p < \frac{19}{3}$ and $n$ lies in a certain range. 
Here, $\Omega \subset \R^d$ is a smooth convex domain with $d < 3p$. 
A particularly important consequence is the estimate
\begin{equation*}
    \int_{\Omega} 
    |\nabla \Delta (u^{\frac{n+p}{p}})|^p\dd x \leq c(n,p,d) \int_{\Omega} u^n|\nabla \Delta u|^p \dd x.
\end{equation*}
The methods used in this article follow the approach of \cite{gruen2001} for the Newtonian case, while addressing the specific challenges posed by the nonlinear higher-order term $|\nabla \Delta u|^{p-2} \nabla \Delta u$ and the additional degeneracy. The derived estimates are key to establishing further qualitative results, such as the existence of weak solutions, finite propagation of support, and the appearance of a waiting-time phenomenon. 
\end{abstract}
\vspace{4pt}

\maketitle

\noindent\textsc{MSC (2020): 
76A05, 
76A20, 
35A23, 
46B70,
35Q35, 
35K35, 
35K65 
}

\noindent\textsc{Keywords: non-Newtonian fluids, power-law fluids, fourth-order degenerate parabolic equations, thin-film equations, Bernis estimates}




\section{Introduction and main results}

\subsection{Background} 
In this article, we derive so-called Bernis estimates for fourth-order doubly-degenerate parabolic problems of the form
\begin{equation} \label{eq:PDE}
    \begin{cases}
        u_t + 
        \diverg
        \bigl(
        u^{n} 
        |\nabla \Delta u|^{p-2} \nabla \Delta u
        \bigr)
        = 0
        & \quad \text{on } (0,\infty)\times \Omega \subset \R^{d+1},
        \\
        \nabla u\cdot \nu = \nabla \Delta u\cdot \nu 
        = 0
        &
        \quad \text{in } (0,\infty)\times \partial \Omega,
         \\
         u(0,\cdot)
         =
         u_0(\cdot)
         &
        \quad \text{in } \Omega,
    \end{cases}
\end{equation}
as they arise in the modeling of non-Newtonian thin-film flows with a strain rate-dependent power-law rheology. 

Problem \eqref{eq:PDE} describes the evolution of the height $u = u(t,x)\geq 0$ of a thin layer of a viscous, non-Newtonian, and incompressible fluid over an impermeable flat bottom, as sketched in Figure \ref{fig:thin-film}.

The partial differential equation $\eqref{eq:PDE}_1$ is a fourth-order quasilinear equation that is doubly-degenerate parabolic, meaning that parabolicity gets lost when either the film height $u$ or the third-order term $\nabla \Delta u$ vanishes. 

The term $u^n$ represents the mobility of the flow, with the exponent $n > 0$ determined by the choice of the slip condition at the lower boundary (i.e., the interface between the solid bottom and the fluid film). For a no-slip condition, we have $n=p+1$, while for a Navier-slip condition, we have $n=p$. Other values of $n \in (p-2,p+1)$ have been proposed to model weaker or stronger slippage; see, for example, \cite{greenspan1978}. The term $|\nabla \Delta u|^{p-2} \nabla \Delta u$ has a dual physical interpretation: the higher-order term $\nabla \Delta u$ accounts for surface tension effects (we assume that the dynamics of the flow is driven solely by surface tension, thus neglecting gravitational effects), while its appearance in the nonlinear form $|s|^{p-2}s$ is due to the non-Newtonian power-law rheology of the fluid. 

The evolution equation $\eqref{eq:PDE}_1$ is supplemented with  Neumann-type boundary conditions $\nabla u \cdot \nu = 0$ and $\nabla\Delta u \cdot \nu = 0$, which represent a zero-contact angle condition and a no-flux condition at the lateral boundary $(0,\infty)\times\partial\Omega$, respectively. 
Finally, $u_0$ denotes the initial film height. 


\begin{center}
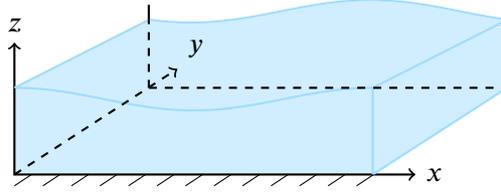
\begin{figure}[h]
\begin{tikzpicture}[domain=0:2*pi, xscale=0.75, yscale=0.5] 
\draw[ultra thick, smooth, variable=\x, lightblue] plot[domain=0.75*pi:2.75*pi] (\x,{0.3*cos(\x r + 0.75*pi)+4.3}); 
\fill[lightblue!50] plot[domain=0:2*pi] (\x,{0}) -- plot[domain=2*pi:0] (\x,{0.3*cos(\x r)+2}); 
\fill[lightblue!50] plot[domain=2*pi:0] (\x,{0.3*cos(\x r)+2}) -- plot[domain=0.75*pi:2.75*pi] (\x,{0.3*cos(\x r + 0.75*pi)+4.3}); 
\fill[lightblue!50] plot[domain=2*pi:0] (\x,{0.3*cos(\x r)+2}) -- plot[domain=0.75*pi:2.75*pi] (\x,{0.3*cos(\x r + 0.75*pi)+4.3}); 
\draw[thick, smooth, variable=\x, lightblue] plot (\x,{0.3*cos(\x r)+2});  
\draw[thick,<->] (2*pi+0.75,0) node[right] {$x$} -- (0,0) -- (0,3.5) node[above] {$z$};
\fill[lightblue!50] (2*pi,0) -- (2*pi,2.3) -- (2.75*pi,4.15) -- (2.75*pi,2.3) -- (2*pi,0); 
\draw[thick,lightblue] (2*pi,0) -- (2*pi,2.3) -- (2.75*pi,4.1) -- (2.75*pi,2.3) -- (2*pi,0); 
\draw[thick,lightblue] (0,2.3) -- (0.75*pi,4.1);
\draw[thick,dashed,->] (0,0) -- (0.75*pi+0.5,2.3+0.5) node[above right] {$y$};
\draw[thick,dashed] (0.75*pi,2.3) -- (0.75*pi,4.1);
\draw[thick] (0.75*pi,4.1) -- (0.75*pi,4.5);
\draw[thick,dashed] (0.75*pi,2.3) -- (2.75*pi,2.3);
\draw[-] (0,-0.3) -- (0.3, 0);
\draw[-] (0.5,-0.3) -- +(0.3, 0.3);
\draw[-] (1,-0.3) -- +(0.3, 0.3);
\draw[-] (1.5,-0.3) -- +(0.3, 0.3);
\draw[-] (2,-0.3) -- +(0.3, 0.3); 
\draw[-] (2.5,-0.3) -- +(0.3, 0.3);
\draw[-] (3,-0.3) -- +(0.3, 0.3);
\draw[-] (3.5,-0.3) -- +(0.3, 0.3);
\draw[-] (4,-0.3) -- +(0.3, 0.3);
\draw[-] (4.5,-0.3) -- +(0.3, 0.3);
\draw[-] (5,-0.3) -- +(0.3, 0.3);
\draw[-] (5.5,-0.3) -- +(0.3, 0.3);
\draw[-] (6,-0.3) -- +(0.3, 0.3);
\end{tikzpicture}   
\caption{Sketch of a fluid film on an impermeable solid bottom.}
\label{fig:thin-film}
\end{figure} 
\end{center}

The natural energy inequality for non-negative solutions to \eqref{eq:PDE} is given by 
\begin{equation}\label{eq:energy_inequ}
    \frac{1}{2}\int_\Omega |\nabla u(t)|^2 \dd x
    +
    \int_0^t\int_\Omega u^n |\nabla \Delta u|^p \dd x \dd s
    \leq
    \frac{1}{2}\int_\Omega |\nabla u_0|^2 \dd x, \quad t \geq 0.
\end{equation}
However, even in the one-dimensional case or for the classical Newtonian thin-film equation ($p=2$), this estimate on the dissipation $\int_\Omega u^n |\nabla \Delta u|^p \dd x$ is not sufficient for many purposes. Further estimates on the integrals $\int_\Omega |\nabla\big(u^\frac{n+p}{3p}\big)|^{3p} \dd x$, $\int_{\Omega} |\Delta\big(u^{\frac{2(n+p)}{3p}}\big)|^{\frac{3p}{2}} \dd x$, and $\int_\Omega |\nabla\Delta\big(u^\frac{n+p}{p}\big)|^{p} \dd x$ are needed to obtain qualitative results such as finite propagation of support or the occurrence of a waiting-time phenomenon. See Section \ref{sec:related_results} for more details on related results.

\medskip


\subsection{Aim of the article and main results}
The main goal of this article is to derive inequalities of the form
\begin{equation*}
    \begin{split}
        \int_\Omega \varphi^{3p} u^{n-2p} |\nabla u|^{3p} \dd x
        +
        \int_\Omega \varphi^{3p} u^{n-\frac{p}{2}} |\Delta u|^\frac{3p}{2} \dd x 
        \leq
        c(n,p,d) \left(
        \int_\Omega \varphi^{3p} u^n |\nabla \Delta u|^p \dd x
        +
        \int_\Omega |\nabla \varphi|^{3p} u^{n+p} \dd x
        \right)
    \end{split}
\end{equation*}
for a given non-negative test function $\phi$, and for positive functions $u \in W^2_p(\Omega),\ 2 \leq p < \frac{19}{3}$, satisfying the Neumann boundary condition $\nabla u \cdot \nu=0$ on $\partial \Omega$ and having finite dissipation $\int_\Omega  u^n |\nabla \Delta u|^p \dd x <\infty$. These are established in Theorem \ref{thm:Bernis} below. 

From these inequalities,  we can deduce further estimates of the form 
\begin{equation*}
    \begin{split}
        &\int_{\Omega} \Big|\nabla \big(u^{\frac{n+p}{3p}}\big)\Big|^{3p} \dd x + 
        \int_{\Omega} \Big|\Delta\big(u^{\frac{2(n+p)}{3p}}\big)\Big|^{\frac{3p}{2}} \dd x
        +
        \int_{\Omega}   \Big|\nabla \Delta\big(u^{\frac{n+p}{p}}\big)\Big|^{p} \dd x  
        \leq c(n,p,d) \int_{\Omega} u^n|\nabla \Delta u|^p \dd x
    \end{split}
\end{equation*}
for $u$ as above. This result is proven in Theorem \ref{thm:consequence} below. 

These inequalities, established in Theorems \ref{thm:Bernis} and \ref{thm:consequence}, provide a basis for applying appropriate interpolation results, which are in turn crucial for proving qualitative properties of solutions to \eqref{eq:PDE}, such as  finite speed of propagation or waiting-time phenomena.

Our results generalize the findings of \cite{gruen2001}, where the Newtonian analogue of \eqref{eq:PDE} is addressed, to non-Newtonian thin-film equations of the form \eqref{eq:PDE}. In the one-dimensional non-Newtonian case, similar Bernis inequalities were shown in \cite{ansini2004} in the parameter range $p \geq \frac{4}{3}$ and $\frac{p-1}{2} < n < 2p-1$.

In the following, we present the main results of this article, Theorems \ref{thm:Bernis} and \ref{thm:consequence}, in detail.

\begin{theorem}\label{thm:Bernis}
Let $\Omega\subset \R^d$, $d < 3p$,  be a bounded, convex domain with smooth boundary $\partial\Omega$. Let $ 2\leq p < \frac{19}{3}$ and 
\begin{equation*}
    2p - 2\frac{2 + \sqrt{(3p-2)^2 + d(3p-4)}}{-3p+6 + \sqrt{(3p-2)^2 + d(3p-4)}} 
    < n 
    < 2p-1.
\end{equation*}
Assume $u\in W^2_p(\Omega)$ is positive, satisfies $\nabla u \cdot \nu = 0$ on $\partial\Omega$, and
\begin{equation*}
    \int_\Omega u^n |\nabla \Delta u|^p\, dx < \infty.
\end{equation*}
Then there exist positive constants $c_i=c_i(n,p,d)>0$,  $i=1,2,3$, 
such that the estimates
\begin{equation} \label{eq:Bernis_new_I}
    \begin{split}
        \int_{\Omega} \phi^{3p}u^{n-2p} |\nabla u|^{3p} \dd x \leq c_{1}\left( \int_{\Omega} \phi^{3p} u^n |\nabla \Delta u|^p \dd x + \int_{\Omega} |\nabla \phi|^{3p} u^{n+p} \dd x \right)
    \end{split}
\end{equation}
and
\begin{equation} \label{eq:Bernis_new_II}
    \begin{split}
        &
        \int_{\Omega} \phi^{3p} u^{n-2p+2} |\nabla u|^{3p-6} |D^2u \nabla u|^2 \dd x
        +
        \int_{\Omega} \phi^{3p} u^{n-2p+2} |\nabla u|^{3p-4} |D^2u|^2 \dd x
        \\
        & + \int_{\partial\Omega}\phi^{3p}  u^{n-2p+2}|\nabla u|^{3p-4} \mathrm{II}(\nabla u,\nabla u) \dd \mathcal{H}^{d-1} 
        \leq\ 
        c_2\left( \int_{\Omega} \phi^{3p} u^n|\nabla \Delta u|^p \dd x + \int_{\Omega} |\nabla \phi|^{3p} u^{n+p} \dd x\right)
    \end{split}
\end{equation}
and 
\begin{equation} \label{eq:Bernis_new_III}
    \begin{split}
        \int_{\Omega} \phi^{3p}u^{n-\frac{p}{2}} |\Delta u|^{\frac{3p}{2}} \dd x \leq c_{3}\left(\int_{\Omega} \phi^{3p} u^n |\nabla \Delta u|^p \dd x + \int_{\Omega} |\nabla \phi|^{3p} u^{n+p} \dd x \right)
    \end{split}
\end{equation}
hold true for all non-negative test functions $\phi \in W^1_\infty(\Omega)$ satisfying $\int_\Omega |\nabla \varphi|^{3p} u^{n+p} \dd x < \infty$. 
Here, $\mathrm{II}$ denotes the (positive semi-definite) second fundamental form of $\partial\Omega$. 
\end{theorem}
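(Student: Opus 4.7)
The strategy follows the blueprint of \cite{gruen2001} for the Newtonian case $p=2$, with every integration by parts reorganised to accommodate the non-Newtonian power $p$. The starting point is the test vector field
\[
V = \phi^{3p}\, u^{n-2p+2}\, |\nabla u|^{3p-4}\, \nabla u,
\]
designed so that the leading part of $\Div V$ reproduces the integrands on the left-hand side of \eqref{eq:Bernis_new_I}--\eqref{eq:Bernis_new_II}. The plan is to compute $\int_\Omega V\cdot\nabla\Delta u \dd x$ by one integration by parts, which eliminates the boundary term by virtue of $\nabla u\cdot\nu=0$, and then to rewrite the resulting $-\int_\Omega \Delta u\,\Div V\dd x$ by the product rule. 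Up to terms carrying a factor $\nabla\phi$, this yields a linear combination of
\[
A=\int_\Omega\phi^{3p}u^{n-2p+2}|\nabla u|^{3p-4}|D^2u|^2\dd x,\qquad B=\int_\Omega\phi^{3p}u^{n-2p+2}|\nabla u|^{3p-6}|D^2u\nabla u|^2\dd x,
\]
\[
C=\int_\Omega\phi^{3p}u^{n-2p}|\nabla u|^{3p}\dd x,
\]
plus the boundary contribution $\int_{\partial\Omega}\phi^{3p}u^{n-2p+2}|\nabla u|^{3p-4}\mathrm{II}(\nabla u,\nabla u)\dd\mathcal{H}^{d-1}$, which appears when a tangential derivative of $|\nabla u|^2/2$ is integrated against the outward normal under the Neumann condition. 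Convexity of $\Omega$ yields $\mathrm{II}\ge0$, so that this boundary term sits on the left with the sign required by \eqref{eq:Bernis_new_II}.

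Next I would bound $\bigl|\int_\Omega V\cdot\nabla\Delta u\dd x\bigr|$ via Hölder with exponents $p$ and $p/(p-1)$, producing the factor $\bigl(\int\phi^{3p}u^n|\nabla\Delta u|^p\dd x\bigr)^{1/p}$ together with a conjugate integral that, after a matched Young inequality, is absorbed into a small multiple of $C$; the $\nabla\phi$-terms are handled by the same Young step and yield the $\int|\nabla\phi|^{3p}u^{n+p}\dd x$-contribution on the right of \eqref{eq:Bernis_new_I}--\eqref{eq:Bernis_new_II}. These two inequalities then close provided the combination $\alpha A+\beta B+\gamma C$ extracted from $\Div V$ is positive-definite on $(A,B,C)$. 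This is an algebraic constraint whose discriminant is exactly $(3p-2)^2+d(3p-4)$, matching the explicit lower bound on $n$ in the hypothesis; the threshold $p<\tfrac{19}{3}$ and the dimensional bound $d<3p$ arise, respectively, from the requirement that the Young absorption close with a strictly positive constant and that the integrands defining $V$ remain integrable. For the third estimate \eqref{eq:Bernis_new_III} I would combine the pointwise bound $|\Delta u|^2\le d\,|D^2u|^2$ with a Hölder interpolation of $u^{n-p/2}|\Delta u|^{3p/2}$ between the integrands of $A$ and $C$ (with exponents chosen to match the power of $u$), reducing the estimate to the two already established.

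Since $u\in W^2_p(\Omega)$ does not have a classical third derivative, every computation must first be carried out for a smooth positive regularisation $u_\eta=(\rho_\eta\ast u)+\eta$ compatible with the Neumann trace, and with the singular factor $|\nabla u|^{3p-4}$ replaced by $(|\nabla u|^2+\delta)^{(3p-4)/2}$; the limits $\delta,\eta\downarrow0$ are then passed by monotone convergence and weak lower semicontinuity, using the hypothesis $\int u^n|\nabla\Delta u|^p\dd x<\infty$ to keep the right-hand side bounded. The principal obstacle lies in the algebraic bookkeeping in the first two steps: the coefficients of $\alpha A+\beta B+\gamma C$ depend nonlinearly on $(n,p,d)$, and isolating the exact parameter window in which all three are simultaneously strictly positive after Young absorption is a delicate computation that produces the intricate lower bound on $n$ and the cut-off $p<\tfrac{19}{3}$. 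A secondary difficulty is that, for $p$ non-integer, the expansion of $\Div V$ contains terms that degenerate on $\{|\nabla u|\ll1\}$, so the $\delta$-regularisation must be chosen with care to ensure that all coefficients remain controlled uniformly as $\delta\downarrow0$.
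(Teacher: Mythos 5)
Your broad strategy (integration by parts, Cauchy--Schwarz, Young absorption, convexity handling the boundary term) is in the same family as the paper's, but the proposal has concrete gaps that would derail a detailed write-up. First, the claim that one integration by parts of $\int_\Omega V\cdot\nabla\Delta u\dd x$ followed by the product rule already yields $\alpha A+\beta B+\gamma C$ is wrong: expanding $\Div V$ for $V=\phi^{3p}u^{n-2p+2}|\nabla u|^{3p-4}\nabla u$ produces integrands such as $\phi^{3p}u^{n-2p+2}|\nabla u|^{3p-4}|\Delta u|^2$, $\phi^{3p}u^{n-2p+1}|\nabla u|^{3p-2}\Delta u$ and $\phi^{3p}u^{n-2p+2}|\nabla u|^{3p-6}\Delta u\,(D^2u\nabla u\cdot\nabla u)$, none of which is $A$, $B$ or $C$, and the Hessian terms $A$, $B$ do not appear at all at this stage. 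Getting $A$ and $B$ requires a separate integration-by-parts identity built on $D^2u\nabla u=\tfrac12\nabla|\nabla u|^2$ (the paper's Lemma \ref{lem:BernisLemma21}); that identity, not the first IBP, is where the second-fundamental-form boundary term enters, and the leftover $|\Delta u|^2$ integral is precisely the term whose sign forces $p<\tfrac{19}{3}$ (the paper's $\mathcal{I}_4$). Also, the sign structure of the resulting coefficients flips at $n=2p-2$, so the paper necessarily splits the argument into $2p-2<n<2p-1$ (Proposition \ref{prop:Bernis_2p-2-2p-1}) and a more delicate case below $2p-2$ (Proposition \ref{prop:Bernis_smaller-2p-2}, where the $\sqrt{(3p-2)^2+d(3p-4)}$ bound emerges from an explicit optimization of a weight $\eps_2$, not just a discriminant observation).

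Second, your approximation step does not work as stated: mollifying $u_\eta=\rho_\eta*u+\eta$ does not preserve $\nabla u_\eta\cdot\nu=0$ on $\partial\Omega$, and without the Neumann condition the boundary contributions do not close. The paper instead approximates by solving a Neumann problem $-\Delta u_k=f_k$ with smooth $f_k\to-\Delta u$ in $W^1_p$, and uses elliptic regularity to get $u_k\to u$ in $W^3_p$ while keeping the boundary condition exactly; the constraint $d<3p$ then appears through the Sobolev embeddings $W^3_p\hookrightarrow W^2_q\hookrightarrow C(\bar\Omega)$ used to propagate positivity. Finally, the $\delta$-regularization of $|\nabla u|^{3p-4}$ is unnecessary: since $p\geq2$, the exponent $3p-4\geq2$ is non-negative, so that factor vanishes (rather than degenerates) on $\{|\nabla u|\ll1\}$.
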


\begin{remark}\label{rem:Bernis}
\begin{itemize}
     \item[(i)] For $p=2$ and $\varphi \equiv 1$, the inequalities in \eqref{eq:Bernis_new_I} and \eqref{eq:Bernis_new_II} recover exactly the results in \cite[Theorem 1.1]{gruen2001}. The localized version for $p=2$ with a non-negative localization function $\varphi$ is mentioned in \cite{gruen2005}, where a proof is sketched for the range $2 - \sqrt{\tfrac{8}{8+d}}< n < 3$. 
    \item[(ii)] For $d=1$, estimates \eqref{eq:Bernis_new_I} and \eqref{eq:Bernis_new_III} are derived in \cite[Proposition 4.1]{ansini2004} for $p \geq \frac{4}{3}$ and $\frac{p-1}{2} < n < 2p-1$. The restriction $p \geq 2$ in our case (instead of $p \geq \frac{4}{3}$, as in $d=1$) is due to the term $|\nabla u|^{3p-6}$ in the integral
    \begin{equation*}
        \int_\Omega \varphi^{3p} u^{n-2p+2} |\nabla u|^{3p-6} |D^2u \nabla u|^2 \dd x.
    \end{equation*}
    In the case $d=1$, this integral simplifies to  
    \begin{equation*}
        \int_\Omega \varphi^{3p} u^{n-2p+2} |\nabla u|^{3p-4} |D^2u|^2 \dd x.
    \end{equation*}
    Note, however, that we can estimate $|D^2u \nabla u|^2 \leq  |D^2 u|^2 |\nabla u|^2$, where $|D^2u|= \sqrt{\sum_{i,j=1}^d (\partial_{ij}u)^2}$ denotes the Frobenius norm of the Hessian matrix $D^2u=(\partial_{ij}u)_{i,j=1,\dots,d}$.  
    \item[(iii)] 
    The restriction $p < \frac{19}{3}$ is used only once, in \eqref{eq:proof-BernisProp23-5} of the proof of Proposition \ref{prop:Bernis_2p-2-2p-1}, to ensure the non-negativity of the integral $\mathcal{I}_4$. This integral does not appear for $d=1$, as in this case $D^2u=\Delta u$.
    \item[(iv)] For smooth functions, Theorem \ref{thm:Bernis} is valid for any dimension $d \geq 1$. The critical dimension $d < 3p$ in Theorem \ref{thm:Bernis} increases monotonically with $p$. It can be computed from the conditions $W^3_p(\Omega) \hookrightarrow W^2_q(\Omega)$ for $1\leq q < \frac{pd}{d-p}$  and $W^2_q(\Omega) \hookrightarrow C(\bar{\Omega})$ for $q > \frac{d}{2}$, as evident from the proof of Theorem \ref{thm:Bernis}.
    \item[(v)] For $p\geq  2$, we have 
    \begin{equation*}
        2p - 2 \frac{2 + \sqrt{(3p-2)^2 + d(3p-4)}}{-3p+6+\sqrt{(3p-2)^2 + d(3p-4)}} < 2p-2.
    \end{equation*}
    For $p=2$, this reduces to $2 - \sqrt{\frac{8}{8+d}} < 2$, which is consistent with \cite{gruen2001,gruen2005}.
For $d = 1$, we have  
\begin{equation*}
2p - 2 \frac{2 + \sqrt{(3p-2)^2 + d(3p-4)}}{-3p + 6 + \sqrt{(3p-2)^2 + d(3p-4)}} = \frac{4p - 2 - 2\sqrt{p(p-1)}}{3},
\end{equation*}
which is greater than $\frac{p-1}{2}$ for all $p \geq 1$. This shows that the bound for $n$ derived in this article is stricter than the one provided in \cite{ansini2004}.
\end{itemize}
\end{remark}

\begin{theorem} \label{thm:consequence}
Let $\Omega\subset \R^d$, $d < 3p$,  be a bounded, convex domain with smooth boundary $\partial\Omega$. Let $ 2\leq p < \frac{19}{3}$ and 
\begin{equation*}
    2p - 2\frac{2 + \sqrt{(3p-2)^2 + d(3p-4)}}{-3p+6 + \sqrt{(3p-2)^2 + d(3p-4)}} 
    < n 
    < 2p-1.
\end{equation*}
Assume $u\in W^2_p(\Omega)$ is  positive, satisfies $\nabla u \cdot \nu = 0$ on $\partial\Omega$, and
\begin{equation*}
    \int_\Omega u^n |\nabla \Delta u|^p\, dx < \infty.
\end{equation*}
Then, we have
\begin{equation}\label{eq:consequence}
    u^{\frac{n+p}{3p}} \in W^1_{3p}(\Omega), \quad 
    u^{\frac{2(n+p)}{3p}} \in W^2_{\frac{3p}{2}}(\Omega),\quad  \text{and} \quad u^{\frac{n+p}{p}} \in W^3_p(\Omega).
\end{equation}
Moreover, there exists a constant \(c_4=c_4(n,p,d)>0\) such that
\begin{equation}\label{eq:Bernis-inequality-more} 
    \begin{split}
        &\int_{\Omega} \Big|\nabla \big(u^{\frac{n+p}{3p}}\big)\Big|^{3p} \dd x + 
        \int_{\Omega} \Big|\Delta\big(u^{\frac{2(n+p)}{3p}}\big)\Big|^{\frac{3p}{2}} \dd x
        +
        \int_{\Omega}   \Big|\nabla \Delta\big(u^{\frac{n+p}{p}}\big)\Big|^{p} \dd x  
        \leq c_4 \int_{\Omega} u^n|\nabla \Delta u|^p \dd x.
    \end{split}
\end{equation}
\end{theorem}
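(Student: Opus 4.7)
The plan is to apply Theorem~\ref{thm:Bernis} with the constant test function $\varphi\equiv 1$ on the bounded domain $\Omega$. Then $|\nabla\varphi|^{3p}\equiv 0$, and \eqref{eq:Bernis_new_I}--\eqref{eq:Bernis_new_III} all reduce to bounds by $c(n,p,d)\int_\Omega u^n|\nabla\Delta u|^p\,\dd x$. The task then becomes expanding $\nabla u^{(n+p)/(3p)}$, $\Delta u^{2(n+p)/(3p)}$, and $\nabla\Delta u^{(n+p)/p}$ by the chain rule and matching each resulting term to a controlled quantity. For the first two exponents the identities
\[
|\nabla u^{(n+p)/(3p)}|^{3p}=c\,u^{n-2p}|\nabla u|^{3p},\qquad |\Delta u^{2(n+p)/(3p)}|^{3p/2}\le C\bigl(u^{n-p/2}|\Delta u|^{3p/2}+u^{n-2p}|\nabla u|^{3p}\bigr)
\]
reduce the first two summands of \eqref{eq:Bernis-inequality-more} immediately to \eqref{eq:Bernis_new_I} and \eqref{eq:Bernis_new_III}.

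For $\beta=(n+p)/p$ the chain rule yields four terms in $\nabla\Delta u^{\beta}$, producing the pointwise bound
\[
|\nabla\Delta u^\beta|^p\le C\bigl(u^n|\nabla\Delta u|^p+u^{n-p}|D^2u\,\nabla u|^p+u^{n-p}|\Delta u|^p|\nabla u|^p+u^{n-2p}|\nabla u|^{3p}\bigr).
\]
The first and the fourth summands are already controlled by (the reduced form of) \eqref{eq:Bernis_new_I}. The third is split by Young's inequality with exponents $3$ and $3/2$ into constant multiples of $u^{n-2p}|\nabla u|^{3p}$ and $u^{n-p/2}|\Delta u|^{3p/2}$, each handled by \eqref{eq:Bernis_new_I} and \eqref{eq:Bernis_new_III}.

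The only genuinely problematic term is $\int_\Omega u^{n-p}|D^2u\,\nabla u|^p\,\dd x$. Bounding $|D^2u\,\nabla u|\le|D^2u|\,|\nabla u|$ and applying H\"older's inequality with exponents $3/2$ and $3$ gives
\[
\int_\Omega u^{n-p}|D^2u|^p|\nabla u|^p\,\dd x\le\Bigl(\int_\Omega u^{n-p/2}|D^2u|^{3p/2}\,\dd x\Bigr)^{2/3}\Bigl(\int_\Omega u^{n-2p}|\nabla u|^{3p}\,\dd x\Bigr)^{1/3}.
\]
To bound the first factor, set $v_2:=u^{2(n+p)/(3p)}$ (which inherits $\nabla v_2\cdot\nu=0$ from $u$) and invert the chain rule $\alpha u^{\alpha-1}D^2u=D^2v_2-\alpha(\alpha-1)u^{\alpha-2}\nabla u\otimes\nabla u$ with $\alpha=2(n+p)/(3p)$ to obtain the pointwise estimate
\[
u^{n-p/2}|D^2u|^{3p/2}\le C\bigl(|D^2v_2|^{3p/2}+u^{n-2p}|\nabla u|^{3p}\bigr).
\]
The Calder\'on--Zygmund estimate for the Neumann Laplacian on the smooth convex domain $\Omega$ controls $\|D^2v_2\|_{L^{3p/2}}$ by $\|\Delta v_2\|_{L^{3p/2}}$ (up to a lower-order term involving $\|v_2\|_{L^{3p/2}}$, handled via Sobolev embedding on the bounded $\Omega$), and $\|\Delta v_2\|_{L^{3p/2}}$ has already been bounded in the previous step. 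Chaining these estimates proves \eqref{eq:Bernis-inequality-more}, and the Sobolev inclusions \eqref{eq:consequence} follow from the finiteness of the respective integrals.

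The hard part is exactly the $|D^2u\,\nabla u|^p$ term for $p>2$: estimate \eqref{eq:Bernis_new_II} supplies only an $L^2$-weighted bound on $|D^2u|$, which is insufficient for a $p$-th power when $p>2$. Lifting to the required $L^{3p/2}$-weighted bound is achieved by the bootstrap through the Neumann problem for $v_2$, where the convexity and smoothness of $\partial\Omega$ enter essentially through the Calder\'on--Zygmund estimate.
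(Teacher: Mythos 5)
Your proof follows the same overall strategy as the paper's: take $\varphi\equiv1$ in Theorem~\ref{thm:Bernis} so that the $\int_\Omega|\nabla\varphi|^{3p}u^{n+p}\,\dd x$ terms vanish; expand the chain rule for the three powers; reduce the first two terms directly to \eqref{eq:Bernis_new_I} and \eqref{eq:Bernis_new_III}; identify $\int_\Omega u^{n-p}|D^2u\,\nabla u|^p\,\dd x$ as the critical piece of the third-order term; re-express the Hessian of $u$ in terms of $D^2\bigl(u^{2(n+p)/(3p)}\bigr)$; and close with elliptic $L^q$-theory for the Neumann problem. The only difference is the order in which you combine the steps: you first bound $|D^2u\,\nabla u|\le|D^2u|\,|\nabla u|$, apply Hölder with exponents $(3/2,3)$, and then invert the chain rule for the Hessian alone, whereas the paper applies the chain-rule identity $D^2u\,\nabla u=\tfrac1s u^{1-s}D^2(u^s)\nabla u-(s-1)u^{-1}|\nabla u|^2\nabla u$ (with $s=2(n+p)/(3p)$) before Hölder; the two routes are algebraically equivalent and lead to the same decomposition.

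One step you should tighten: the lower-order term in the Neumann elliptic estimate is not "handled via Sobolev embedding" — that phrasing is circular, since a Sobolev bound on $\|v_2\|_{L^{3p/2}}$ would again require a $W^1_{3p/2}$-norm of $v_2$. The paper instead replaces $\|v_2\|_{L^{3p/2}}$ by $\|\nabla v_2\|_{L^{3p/2}}$ (a Poincaré--Wirtinger step, legitimate because $D^2$ is unchanged upon subtracting the mean of $v_2$) and then controls $\|\nabla v_2\|_{L^{3p/2}}$ via Hölder together with the already-established first-order Bernis bound. Making this explicit in your sketch would close the gap; the rest of the argument matches the paper's.
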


\begin{remark}
For the first two terms in \eqref{eq:Bernis-inequality-more}, we also obtain localized estimates of the form 
\begin{equation*}
    \begin{split}
        &\int_{\Omega}
        \varphi^{3p} \Big|\nabla \big(u^{\frac{n+p}{3p}}\big)\Big|^{3p} \dd x + 
        \int_{\Omega}
        \varphi^{3p} \Big|\Delta\big(u^{\frac{2(n+p)}{3p}}\big)\Big|^{\frac{3p}{2}} \dd x 
        \leq 
        c(n,p,d)\left( \int_{\Omega} 
        \varphi^{3p} u^n|\nabla \Delta u|^p \dd x
        \int_{\Omega} 
        |\nabla \varphi|^{3p} u^{n+p}\dd x
        \right)
    \end{split}
\end{equation*}
for all non-negative test functions $\phi \in W^1_\infty(\Omega)$ satisfying $\int_\Omega |\nabla \phi|^{3p} u^{n+p}\, dx < \infty$, see Corollary \ref{cor:improve_weighted}. However, deriving a localized version for the third-order term in \eqref{eq:Bernis-inequality-more} turns out to be significantly more challenging. This issue is discussed in Remark \ref{rem:consequences} in detail.
\end{remark}

We emphasize again that the strategy used to prove inequalities \eqref{eq:Bernis_new_I} and \eqref{eq:Bernis_new_II} in Theorem \ref{thm:Bernis} is strongly inspired by the approach in \cite{gruen2001}, while incorporating the necessary generalizations. In the Newtonian case $p=2$, these two inequalities alone are sufficient to derive the weighted analogue of Theorem \ref{thm:consequence}. Inequality \eqref{eq:Bernis_new_III} in Theorem \ref{thm:Bernis} can be established similarly to the one-dimensional case in \cite{ansini2004}. Nonetheless, the results presented here are novel and play a significant role in studying weak solutions to \eqref{eq:PDE}, as well as in proving qualitative properties such as finite speed of propagation and the occurrence of waiting-time phenomena. Further details on potential applications of Theorems \ref{thm:Bernis} and \ref{thm:consequence} are given in Section \ref{sec:related_results}, where we summarize the state of the art for the one-dimensional non-Newtonian case and for the Newtonian case in dimensions $1 \leq d < 6$.

\medskip


\subsection{Related results} \label{sec:related_results}

In the one-dimensional Newtonian case, the corresponding thin-film equation is given by
\begin{equation}\label{eq:1d_Newtonian}
u_t + \bigl(u^n u_{xxx}\bigr)_x = 0.
\end{equation}
This equation was first studied in the seminal paper \cite{BF1990}. The property of finite speed of propagation for \eqref{eq:1d_Newtonian} was shown in \cite{bernis96_1,bernis96_2} for $0 < n < 2$ and $2 \leq n < 3$, respectively, with upper bounds on the propagation rate for  $0 < n < 2$  derived in \cite{bernis96_1}. The ideas used in the proofs differ depending on the mobility exponent $n$. For  $0 < n < 2$, the proof relies heavily on the so-called $\alpha$-entropy estimates, while for $2 \leq n < 3$, it is based on localized versions of the energy inequality \eqref{eq:energy_inequ} in $d = 1$. In \cite{HS1998}, the authors showed a finite speed of propagation result in terms of the  $L_1$-norm. The occurrence of a waiting-time phenomenon was proved in \cite{dPGG2001}, even for higher space dimensions.

The one-dimensional non-Newtonian analogue of \eqref{eq:PDE} is given by
\begin{equation*}
u_t + \bigl(u^n |u_{xxx}|^{p-2} u_{xxx}\bigr)_x = 0,
\end{equation*}
and Bernis estimates in the style of Theorems \ref{thm:Bernis} and \ref{thm:consequence} were derived in \cite{ansini2004}, where the authors used these estimates in particular to guarantee the zero-contact angle condition for the constructed global non-negative solutions.  Earlier, estimates of the form \eqref{eq:Bernis_new_I} and \eqref{eq:Bernis_new_III} were obtained in \cite{bernis96_3}, where the parameter range  $p \geq \frac{4}{3}$  and  $n \in \big(\frac{p-1}{2}, \frac{p+1}{2} - \frac{1}{3p}\big)$ was considered. 
Let us note that $\alpha$-entropy estimates are not available in the non-Newtonian setting. 

The higher-dimensional Newtonian thin-film equation reads
\begin{equation}\label{eq:PDE_Newtonian}
u_t + \diverg\bigl(u^n \nabla \Delta u\bigr) = 0,
\end{equation}
and Bernis estimates for this equation were derived in \cite{gruen2001}. An entropy estimate for \eqref{eq:PDE_Newtonian} was shown in \cite{dPGG1998}, and based on this, further qualitative results for the parameter range $0 < n < 2$ -- including finite speed of propagation, waiting-time phenomena, and the existence of solutions to \eqref{eq:PDE_Newtonian} emanating from measure-valued initial data -- were obtained in \cite{BdPGG1998}, \cite{dPGG1999}, and \cite{dPGG2001}.

To the best of the authors’ knowledge, this is the first paper to address Bernis estimates for the higher-dimensional non-Newtonian thin-film equation.

\medskip


\subsection{Notation} \label{sec:notation} 

Let $\Omega \subset \R^d$ be a domain. The standard $L_q(\Omega)$-norm for $1 \leq q \leq \infty$ is denoted by $\| \cdot \|_{L_q(\Omega)}$. The Sobolev space $W^m_q(\Omega)$, where $m \in \N$, consists of functions whose generalized derivatives up to order $m$ belong to $L_q(\Omega)$. The corresponding norm is denoted by $\| \cdot \|_{W^m_q(\Omega)}$.

For a sufficiently differentiable function $u : \Omega \to \R$, the gradient of $u$ is denoted by $\nabla u$, and the Hessian matrix of $u$ is denoted by $D^2 u$. The Laplacian $\Delta u$ is defined as the trace of the Hessian matrix $D^2 u$.

The exterior unit normal vector to the boundary $\partial \Omega$ is denoted by $\nu$. Depending on the context, $|\cdot|$ represents the absolute value for scalars in $\R$, the Euclidean norm for vectors in $\R^d$, or the Frobenius norm for matrices in $\R^{d \times d}$.

\medskip


This article is organized into three sections: in Section \ref{sec:secBernis}, we prove Theorem \ref{thm:Bernis}, and in Section \ref{sec:secconsequence}, we prove Theorem \ref{thm:consequence}.

\medskip


\section{Proof of Theorem \ref{thm:Bernis} } \label{sec:secBernis}
 This main section of the paper is devoted to the proof of Theorem \ref{thm:Bernis}, and we follow, more or less, the approach of Grün \cite{gruen2001}, where Bernis estimates for the higher-dimensional thin-film equation were established in the Newtonian setting.

The proof proceeds in three main steps. First, we derive a technical auxiliary integral identity, presented in Lemma \ref{lem:BernisLemma21}. Next, we establish the desired estimates \eqref{eq:Bernis_new_I} and \eqref{eq:Bernis_new_II} for two specific ranges of $n$. For the range $2p-2 < n < 2p-1$, the estimates are proved in Proposition \ref{prop:Bernis_2p-2-2p-1}. For the range
\begin{equation*}
    2p - 2\frac{2 + \sqrt{(3p-2)^2 + d(3p-4)}}{-3p+6 + \sqrt{(3p-2)^2 + d(3p-4)}} 
    < 
    n 
    < 
    2p - 2 \frac{6p-6 + \sqrt{(3p-2)^2 + d(3p-4)}}{9p-10+\sqrt{(3p-2)^2 + d(3p-4)}}
\end{equation*} 
the estimates are obtained in Proposition \ref{prop:Bernis_smaller-2p-2}. In both cases, these results are shown for smooth functions. The inequality \eqref{eq:Bernis_new_III} is proved directly in Proposition \ref{prop:Bernis_new_III} for the full range
\begin{equation*}
    2p - 2\frac{2 + \sqrt{(3p-2)^2 + d(3p-4)}}{-3p+6 + \sqrt{(3p-2)^2 + d(3p-4)}} 
    < 
    n 
    < 
    2p - 1.
\end{equation*}
This result is also established for smooth functions. Finally, an approximation argument is used to extend these results to $u \in W^2_p(\Omega)$. Since 
\begin{equation*}
    2p - 2\frac{2 + \sqrt{(3p-2)^2 + d(3p-4)}}{-3p+6 + \sqrt{(3p-2)^2 + d(3p-4)}} 
    <
    2p-2
    \quad \text{and} \quad
    2p - 2 \frac{6p-6 + \sqrt{(3p-2)^2 + d(3p-4)}}{9p-10+\sqrt{(3p-2)^2 + d(3p-4)}}
    <
    2p-1,
\end{equation*}
we conclude Theorem \ref{thm:Bernis} for the entire range 
\begin{equation*}
    2p - 2\frac{2 + \sqrt{(3p-2)^2 + d(3p-4)}}{-3p+6 + \sqrt{(3p-2)^2 + d(3p-4)}} 
    <
    n
    <
    2p-1.
\end{equation*}

Lemma \ref{lem:BernisLemma21}, as mentioned earlier, provides the auxiliary integral identity. To prove this lemma, we use the following relation between the normal derivative of $|\nabla u|^2$ and the second fundamental form of the boundary of $\Omega$.

\begin{lemma}\label{lem:II}
Let $\Omega \subset \R^d$ be a bounded smooth domain, and let $\mathrm{II}$ denote the second fundamental form of $\partial \Omega$. For any function $u \in C^2(\bar{\Omega})$ with $\nabla u \cdot \nu = 0$ on $\partial \Omega$, we have $\nabla |\nabla u|^2 \cdot \nu 
= -2 \mathrm{II}(\nabla u, \nabla u)$. In particular, if $\Omega$ is convex, then  $\nabla |\nabla u|^2 \cdot \nu \leq 0$ on $\partial \Omega$. 
\end{lemma}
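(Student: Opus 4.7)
The plan is to reduce the identity to a tangential differentiation of the boundary condition $\nabla u\cdot\nu=0$ along the (tangential) field $\nabla u$. First, I would expand the left-hand side as
\begin{equation*}
    \nabla |\nabla u|^2 \cdot \nu
    = 2\,(D^2 u\,\nabla u)\cdot \nu
    = 2\sum_{i,j=1}^d \nu_i\,(\partial_{ij}u)\,(\partial_j u).
\end{equation*}
To rewrite this in terms of the second fundamental form, I extend the unit outer normal $\nu$ smoothly to a neighborhood of $\partial\Omega$ (for instance via the signed distance function, so that $\nu=\nabla d$ with $|\nabla d|=1$ near $\partial\Omega$), so that the function $\nabla u\cdot \nu$ is defined in a neighborhood of $\partial\Omega$ and vanishes identically on $\partial\Omega$ by assumption.

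Next, since $\nabla u\cdot\nu=0$ on $\partial\Omega$, the field $\nabla u$ is tangent to $\partial\Omega$. Therefore the tangential derivative of $\nabla u\cdot \nu$ along $\nabla u$ vanishes on $\partial\Omega$, which by the product rule reads
\begin{equation*}
    0 = (\nabla u\cdot \nabla)(\nabla u \cdot \nu)
    = \sum_{i,j=1}^d (\partial_j u)(\partial_j\partial_i u)\,\nu_i
    + \sum_{i,j=1}^d (\partial_j u)(\partial_i u)\,\partial_j \nu_i
    = (D^2 u\,\nabla u)\cdot \nu + \langle \nabla_{\nabla u}\nu,\, \nabla u\rangle
\end{equation*}
on $\partial\Omega$. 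By the definition of the second fundamental form (with the sign convention in which $\mathrm{II}$ is positive semi-definite for convex $\Omega$), the last inner product equals $\mathrm{II}(\nabla u,\nabla u)$ for tangent vectors. Combining this with the expansion above yields the claimed identity
\begin{equation*}
    \nabla |\nabla u|^2 \cdot \nu = -2\,\mathrm{II}(\nabla u,\nabla u).
\end{equation*}
The convexity statement then follows immediately: for a convex domain, $\mathrm{II}$ is positive semi-definite with respect to the outer unit normal, so the right-hand side is non-positive.

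The identity is standard differential geometry, and there is no real obstacle; the only point requiring a tiny bit of care is the smooth extension of $\nu$ away from $\partial\Omega$ so that the tangential derivative formula makes sense, but this is guaranteed by the smoothness of $\partial\Omega$. Everything else is just the product rule together with the defining formula $\mathrm{II}(X,Y)=\langle \nabla_X \nu,Y\rangle$ for tangent vectors $X,Y$.
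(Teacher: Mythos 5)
The paper does not give its own proof of Lemma \ref{lem:II}; it only points to \cite{castenholland78}, \cite{sternbergzumbrun}, and the references in \cite{gruen2001}, so there is nothing to compare against line by line. Your argument is, however, the standard one found in those references and it is correct. The key steps all hold up: $\nabla|\nabla u|^2\cdot\nu = 2(D^2u\,\nabla u)\cdot\nu$; the boundary condition makes $\nabla u$ tangent to $\partial\Omega$, so the tangential derivative $(\nabla u\cdot\nabla)(\nabla u\cdot\nu)$ vanishes on $\partial\Omega$ (after the smooth extension of $\nu$ via the signed distance function, which is legitimate for smooth $\partial\Omega$); the product rule then yields $(D^2u\,\nabla u)\cdot\nu = -\langle\nabla_{\nabla u}\nu,\nabla u\rangle$; and with the convention $\mathrm{II}(X,Y)=\langle\nabla_X\nu,Y\rangle$ for tangent vectors (which is exactly the one that makes $\mathrm{II}$ positive semi-definite on a convex domain with the outward normal, consistent with the paper's usage) this gives $\nabla|\nabla u|^2\cdot\nu=-2\,\mathrm{II}(\nabla u,\nabla u)$. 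The convexity conclusion is immediate. The only thing worth stating a touch more explicitly, for a polished write-up, is that the derivative $(\nabla u\cdot\nabla)(\nabla u\cdot\nu)$ vanishes precisely because $\nabla u$ is tangent and $\nabla u\cdot\nu$ restricts to the zero function on $\partial\Omega$ — you say this, but one could spell out that this is the derivative of the zero function along a curve in $\partial\Omega$ — but this is a matter of exposition, not a gap.
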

This control of the normal derivative of  $|\nabla u|^2$  on  $\partial \Omega$  is widely used in the literature; see, for example, \cite{castenholland78}, \cite{sternbergzumbrun}, and the references cited in \cite{gruen2001}. For details, we refer to these works. 

\begin{lemma}\label{lem:BernisLemma21}
Let $\Omega \subset \R^d$ be a bounded smooth domain, and let $p \geq 2$.  For any positive function $u\in C^{\infty}(\bar{\Omega})$ satisfying $\nabla u \cdot \nu = 0$ on \(\partial \Omega\), and for any \(\phi \in W^{1}_\infty(\Omega)\), the following identity holds:
\begin{equation}\label{eq:BernisLemma21}
\begin{split}
    & \int_{\Omega} \phi^{3p} u^{n-2p+2} |\nabla u|^{3p-6} |D^2u\nabla u|^2  \dd x \\
    & = - \frac{3p}{3p-4}\int_{\Omega} \phi^{3p-1} u^{n-2p+2}|\nabla u|^{3p-4} \nabla\phi \cdot D^2u \nabla u \dd x  
    \\
    &\quad - \frac{n-2p+2}{3p-4} \int_{\Omega} \phi^{3p} u^{n-2p+1} |\nabla u|^{3p-4} \nabla u\cdot D^2u \nabla u \dd x \\ 
    &\quad - \frac{1}{3p-4} \int_{\Omega} \phi^{3p} u^{n-2p+2} |\nabla u|^{3p-4} |D^2u|^2 \dd x  - \frac{1}{3p-4} \int_{\Omega} \phi^{3p} u^{n-2p+2} |\nabla u|^{3p-4} \nabla u \cdot \nabla\Delta u \dd x \\ 
    &\quad- \frac{1}{3p-4} \int_{\partial \Omega} \phi^{3p} u^{n-2p+2} |\nabla u|^{3p-4} \mathrm{II}(\nabla u,\nabla u) \dd \mathcal{H}^{d-1}.
\end{split}
\end{equation}
\end{lemma}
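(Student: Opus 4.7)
The identity should follow from a single integration by parts once we exploit the elementary observation $D^2 u\,\nabla u = \tfrac{1}{2}\nabla(|\nabla u|^2)$. Using it, the integrand on the left-hand side is rewritten as $\tfrac{1}{2}\phi^{3p}u^{n-2p+2}|\nabla u|^{3p-6}(D^2u\,\nabla u)\cdot\nabla(|\nabla u|^2)$, and I will integrate by parts, moving the gradient off $|\nabla u|^2$ onto the remaining factor $\phi^{3p}u^{n-2p+2}|\nabla u|^{3p-6}D^2u\,\nabla u$.

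Expanding $\nabla\cdot[\phi^{3p}u^{n-2p+2}|\nabla u|^{3p-6}D^2u\,\nabla u]$ via the product rule produces four contributions. Differentiation of $\phi^{3p}$ and $u^{n-2p+2}$ yields, after multiplication by $-\tfrac{1}{2}|\nabla u|^2$, the first two terms on the right-hand side of \eqref{eq:BernisLemma21}. Differentiation of $|\nabla u|^{3p-6}$, using $\nabla|\nabla u|^{3p-6} = (3p-6)|\nabla u|^{3p-8}D^2u\,\nabla u$, reproduces a multiple of the left-hand side itself; this gives rise to a self-referential equation whose algebraic rearrangement accounts for the prefactor $\tfrac{1}{3p-4}$ in \eqref{eq:BernisLemma21}. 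The fourth contribution comes from the identity $\nabla\cdot(D^2u\,\nabla u) = |D^2u|^2 + \nabla u\cdot\nabla\Delta u$ and produces the two remaining interior terms. The boundary contribution is handled via $D^2u\,\nabla u\cdot\nu = \tfrac{1}{2}\nabla(|\nabla u|^2)\cdot\nu$, which on $\partial\Omega$ equals $-\mathrm{II}(\nabla u,\nabla u)$ by Lemma \ref{lem:II} (since $\nabla u\cdot\nu = 0$), yielding exactly the final boundary term in \eqref{eq:BernisLemma21}.

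The main technical hurdle is justifying the computation at critical points of $u$: for $2\leq p<8/3$ the factor $|\nabla u|^{3p-8}$ emerging from $\nabla|\nabla u|^{3p-6}$ is pointwise singular. I would resolve this by the standard regularization $|\nabla u|^{3p-6}\leadsto (|\nabla u|^2+\eps)^{(3p-6)/2}$, performing the smooth IBP, and then sending $\eps\to 0^+$ via dominated convergence: the elementary bound $|D^2u\,\nabla u|^2\leq |D^2u|^2|\nabla u|^2$ (together with smoothness of $u$ on $\bar{\Omega}$ and $\phi\in W^1_\infty(\Omega)$) furnishes a uniform integrable majorant for the problematic term, while all other terms are uniformly bounded on $\bar\Omega$.
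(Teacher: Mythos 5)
Your proof is correct and takes essentially the same route as the paper's: both rely on $D^2u\,\nabla u=\tfrac12\nabla|\nabla u|^2$, a single integration by parts, absorption of the self-referential $\tfrac{3p-6}{2}$-multiple of the left-hand side to produce the overall $\tfrac{1}{3p-4}$ prefactor, the identity $\diverg(D^2u\,\nabla u)=|D^2u|^2+\nabla u\cdot\nabla\Delta u$, and Lemma \ref{lem:II} for the boundary term. The only difference from the paper is cosmetic bookkeeping (the paper first rewrites $|\nabla|\nabla u|^2|^2=\diverg\bigl(|\nabla u|^2\,\nabla|\nabla u|^2\bigr)-|\nabla u|^2\Delta|\nabla u|^2$ and then integrates the divergence by parts, whereas you integrate $\tfrac12\int A\cdot\nabla|\nabla u|^2$ by parts directly); your closing remark on the $\eps$-regularization of $|\nabla u|^{3p-6}$ near critical points is a worthwhile clarification that the paper leaves implicit.
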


\begin{proof}
We first use the identities 
\begin{equation*}
    D^2u \nabla u = \tfrac{1}{2} \nabla |\nabla u|^2
    \quad
    \text{and}
    \quad
    \nabla |\nabla u|^2 \cdot \nabla |\nabla u|^2 = \diverg\bigl(|\nabla u|^2\, \nabla |\nabla u|^2\bigr) - |\nabla u|^2 \Delta |\nabla u|^2
\end{equation*} 
to obtain
\begin{equation*}
\begin{split}
    &
    \int_{\Omega} \phi^{3p} u^{n-2p+2} |\nabla u|^{3p-6} |D^2u\nabla u|^2  \dd x \\
    &= \frac{1}{4} \int_{\Omega} \phi^{3p} u^{n-2p+2} |\nabla u|^{3p-6} \nabla |\nabla u|^2 \cdot \nabla |\nabla u|^2 \dd x \\
    &=
    \frac{1}{4} \int_{\Omega} \phi^{3p} u^{n-2p+2}
    |\nabla u|^{3p-6}
    \diverg(|\nabla u|^2\, \nabla |\nabla u|^2) \dd x
    -
    \frac{1}{4} \int_{\Omega} \phi^{3p} u^{n-2p+2}
    |\nabla u|^{3p-4}
    \Delta |\nabla u|^2 \dd x.
\end{split}
\end{equation*}
Applying integration by parts on the first integral on the right-hand side and using that $\nabla |\nabla u|^2\cdot \nu =  -2\mathrm{II}(\nabla u,\nabla u)$ on $\partial \Omega$, see Lemma \ref{lem:II}, then yields
\begin{equation*}
\begin{split}
    & 
    \int_{\Omega} \phi^{3p} u^{n-2p+2} |\nabla u|^{3p-6} |D^2u\nabla u|^2  \dd x \\
    &= - \frac{3p}{4} \int_{\Omega} \phi^{3p-1} u^{n-2p+2}|\nabla u|^{3p-4} \nabla\phi \cdot \nabla |\nabla u|^2 \dd x - \frac{n-2p+2}{4} \int_{\Omega} \phi^{3p} u^{n-2p+1} |\nabla u|^{3p-4} \nabla u \cdot \nabla |\nabla u|^2 \dd x \\
    & \quad - \frac{3p-6}{4} \int_{\Omega} \phi^{3p} u^{n-2p+2} |\nabla u|^{3p-6} D^2 u\nabla u \cdot \nabla |\nabla u|^2 \dd x - \frac{1}{4} \int_{\Omega} \phi^{3p}u^{n-2p+2} |\nabla u|^{3p-4} \Delta |\nabla u|^2 \dd x \\
    & \quad - \frac{1}{2} \int_{\partial\Omega} \phi^{3p} u^{n-2p+2} |\nabla u|^{3p-4} \mathrm{II}(\nabla u,\nabla u) \dd \mathcal{H}^{d-1}.
\end{split}
\end{equation*}
With the identities 
$\nabla |\nabla u|^2 = 2 D^2 u \nabla u$ and 
$\Delta |\nabla u|^2 = 2(|D^2u|^2 + \nabla u\cdot \nabla \Delta u)$, this can be rewritten as
\begin{equation*}
\begin{split}
    & \int_{\Omega} \phi^{3p} u^{n-2p+2}  |\nabla u|^{3p-6} |D^2u\nabla u|^2  \dd x \\
    & = - \frac{3p}{2} \int_{\Omega} \phi^{3p-1} u^{n-2p+2}|\nabla u|^{3p-4} \nabla\phi \cdot D^2u \nabla u \dd x - \frac{n-2p+2}{2} \int_{\Omega} \phi^{3p} u^{n-2p+1} |\nabla u|^{3p-4}  D^2 u \nabla u \cdot \nabla u \dd x \\
    & \quad - \frac{3p-6}{2} \int_{\Omega} \phi^{3p} u^{n-2p+2} |\nabla u|^{3p-6} |D^2 u\nabla u|^2 \dd x - \frac{1}{2} \int_{\Omega} \phi^{3p} u^{n-2p+2} |\nabla u|^{3p-4} |D^2u|^2 \dd x\\
    & \quad  - \frac{1}{2} \int_{\Omega} \phi^{3p} u^{n-2p+2} |\nabla u|^{3p-4} \nabla u \cdot \nabla \Delta u \dd x - \frac{1}{2} \int_{\partial\Omega} \phi^{3p} u^{n-2p+2} |\nabla u|^{3p-4} \mathrm{II}(\nabla u,\nabla u) \dd \mathcal{H}^{d-1}.
\end{split}
\end{equation*}
Since the third integrand on the right-hand side equals the integrand on the left-hand side and since $p \geq 2 > \tfrac{4}{3}$  
ensures that $\frac{1}{3p-4} > 0$,
the proof is complete.
\end{proof}

We remark that in Lemma \ref{lem:BernisLemma21}, we need $p \geq 2 > \frac{4}{3}$ in the final step of the proof to ensure that $\frac{2}{3p-4} > 0$. In this case, we have $\frac{n-2p+2}{3p-4} > 0$ if and only if $n > 2p-2$.
Next, we establish Theorem \ref{thm:Bernis} for fixed $2 \leq p < \frac{19}{3}$ in the case $2p-2<n<2p-1$. Note that this corresponds to the bounds $2 < n < 3$ in the Newtonian case $p=2$, as detailed in \cite[Lemma 2.3]{gruen2001}.

\begin{proposition}\label{prop:Bernis_2p-2-2p-1}
Let $\Omega\subset \R^d$ be a bounded, convex and smooth domain, and let $2 \leq p < \frac{19}{3}$ be fixed. Assume that $u\in C^{\infty}(\bar{\Omega})$ is positive and satisfies $\nabla u \cdot \nu = 0$ on \(\partial \Omega\), and let \(\phi \in W^{1}_\infty(\Omega)\) be non-negative. 
If
\begin{equation*}
    2p-2 < n < 2p-1 
\end{equation*}
then there exist constants \(C_i=C_i(n,p) > 0\), $i=1,2$, such that the following estimates hold:
\begin{equation}\label{eq:BernisLemma23-1}
    \begin{split}
        \int_{\Omega} \phi^{3p}u^{n-2p} |\nabla u|^{3p} \dd x \leq C_{1}\left[ \int_{\Omega} \phi^{3p} u^n |\nabla \Delta u|^p \dd x + \int_{\Omega} |\nabla \phi|^{3p} u^{n+p} \dd x \right]
    \end{split}
    \end{equation}
    and
    \begin{equation}\label{eq:BernisLemma23-2}
    \begin{split}
       & \int_{\Omega} \phi^{3p} u^{n-2p+2} |\nabla u|^{3p-6} |D^2u\nabla u|^2 \dd x + \int_{\Omega} \phi^{3p} u^{n-2p+2}|\nabla u|^{3p-4} |D^2u|^2 \dd x\\
       & + \int_{\Omega}\phi^{3p} u^{n-2p+2}|\nabla u|^{3p-4} |\Delta u|^2 \dd x  + \int_{\partial \Omega}\phi^{3p} u^{n-2p+2}|\nabla u|^{3p-4} \mathrm{II}(\nabla u,\nabla u) \dd \mathcal{H}^{d-1} \\
       \leq\ & C_{2}\left[ \int_{\Omega}\phi^{3p}  u^n |\nabla \Delta u|^p \dd x + \int_{\Omega} |\nabla \phi|^{3p} u^{n+p} \dd x \right].
    \end{split}
    \end{equation}
\end{proposition}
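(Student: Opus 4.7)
\emph{Plan.} The identity from Lemma~\ref{lem:BernisLemma21} expresses $A := \int_\Omega \phi^{3p} u^{n-2p+2} |\nabla u|^{3p-6} |D^2 u\, \nabla u|^2 \,dx$ in terms of five other integrals. Four of them can be handled under our sign hypotheses, but the sign-indefinite term $I_2 := \int_\Omega \phi^{3p} u^{n-2p+1} |\nabla u|^{3p-4} (\nabla u \cdot D^2 u\, \nabla u)\,dx$ appears with coefficient $-(n-2p+2)/(3p-4)$, which is negative for $n>2p-2$ and constitutes the main obstruction. The strategy is to produce a second, independent integral identity that expresses $I_2$ as a linear combination of the target integral $C := \int_\Omega \phi^{3p} u^{n-2p} |\nabla u|^{3p}\,dx$ and simpler pieces, and then substitute this into \eqref{eq:BernisLemma21} to move $C$ to the left-hand side with a strictly positive coefficient precisely when $2p-2<n<2p-1$.

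To obtain this auxiliary identity, I would compute $\int_\Omega \phi^{3p} \nabla u \cdot \nabla(u^{n-2p+1} |\nabla u|^{3p-2})\,dx$ in two ways. Expanding the product gives $(n-2p+1)C + (3p-2)I_2$, while integration by parts (using $\nabla u\cdot\nu = 0$) gives $-3pJ_1 - J_2$, where $J_1 := \int_\Omega \phi^{3p-1} u^{n-2p+1} |\nabla u|^{3p-2} (\nabla\phi\cdot\nabla u)\,dx$ and $J_2 := \int_\Omega \phi^{3p} u^{n-2p+1} |\nabla u|^{3p-2}\, \Delta u\,dx$. Solving for $I_2$ and substituting into \eqref{eq:BernisLemma21}, and using convexity of $\Omega$ via Lemma~\ref{lem:II} to ensure $G := \int_{\partial\Omega} \phi^{3p} u^{n-2p+2} |\nabla u|^{3p-4} \mathrm{II}(\nabla u,\nabla u)\,d\mathcal{H}^{d-1}\geq 0$, I would arrive at the rearranged identity
\begin{equation*}
(3p-4) A + B + G + \tfrac{(n-2p+2)(2p-1-n)}{3p-2}\, C \;=\; -3p\,I_1 + \tfrac{3p(n-2p+2)}{3p-2}\, J_1 + \tfrac{n-2p+2}{3p-2}\, J_2 - I_3,
\end{equation*}
in which every coefficient on the left is strictly positive under the hypothesis $2p-2<n<2p-1$.

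The third step is to estimate the four right-hand side terms by Young's inequality. Concretely, $|I_1|$ is split into contributions to $B$, $E := \int|\nabla\phi|^{3p} u^{n+p}\,dx$, and $C$ by a two-stage Young (first a $(2,2)$-split isolating $|D^2u|^2$, then a $(3p/2, 3p/(3p-2))$-split on the $|\nabla\phi|^2$-remainder); $|J_1|$ is split into $E$ and $C$ by a single $(3p, 3p/(3p-1))$-Young; $|I_3|$ is split into $D := \int\phi^{3p}u^n|\nabla\Delta u|^p\,dx$ and $C$ by a $(p, p/(p-1))$-Young; finally $|J_2|$ is controlled via Cauchy--Schwarz $|J_2|\leq\sqrt{F\cdot C}\leq\sqrt{d\,B\,C}$ (using $F\leq dB$, which follows from $(\Delta u)^2\le d|D^2u|^2$), followed by AM--GM into $B$ and $C$. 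Each estimate contains a free parameter. The assertion for $F$ in \eqref{eq:BernisLemma23-2} then follows from $F\leq dB$, and the one for $A$ either from the identity itself or from the pointwise bound $|D^2u\,\nabla u|\leq|D^2u||\nabla u|$.

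The main obstacle lies in choosing the Young parameters so that the total right-hand side contributions to the coefficients of $A$, $B$, and $C$ are strictly smaller than the left-hand side coefficients $(3p-4)$, $1$, and $\frac{(n-2p+2)(2p-1-n)}{3p-2}$, respectively. The tightest constraint comes from $J_2$, whose $B$- and $C$-contributions multiply to a fixed quantity proportional to $(n-2p+2)^2 d/(3p-2)^2$ independently of the Young parameter; absorption therefore reduces to an algebraic inequality in $n,p,d$ that must hold throughout the interval. Since the left-hand side coefficient of $C$ vanishes at both endpoints of $(2p-2, 2p-1)$, the constants $C_1, C_2$ necessarily blow up as $n$ approaches either endpoint, but they remain finite in the open interval, which is exactly the statement of the proposition.
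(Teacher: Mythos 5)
Your auxiliary identity is correct, and it is in fact the same identity as the paper's equation (2.5) ($I_2$ solved for by integrating $\int\phi^{3p}\nabla u\cdot\nabla(u^{n-2p+1}|\nabla u|^{3p-2})\,dx$ by parts), so the rearranged identity you obtain,
\begin{equation*}
(3p-4)A + B + G + \tfrac{(n-2p+2)(2p-1-n)}{3p-2}\,C \;=\; -3p\,I_1 + \tfrac{3p(n-2p+2)}{3p-2}\, J_1 + \tfrac{n-2p+2}{3p-2}\, J_2 - I_3,
\end{equation*}
is a correct reorganization of the paper's (2.5) combined with Lemma~\ref{lem:BernisLemma21}. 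The gap is in the treatment of the bulk term $J_2$. You propose $|J_2|\le\sqrt{F\,C}\le\sqrt{d\,B\,C}$ followed by AM--GM, so the $J_2$-contribution to the $(B,C)$-budget is $\tfrac{(n-2p+2)\sqrt d}{2(3p-2)}\bigl(\lambda^{-1}B+\lambda C\bigr)$. Since the LHS coefficients available are $1$ for $B$ and $\kappa:=\tfrac{(n-2p+2)(2p-1-n)}{3p-2}$ for $C$, absorption of $J_2$ alone requires a $\lambda$ with $\tfrac{(n-2p+2)\sqrt d}{2(3p-2)}<\lambda<\tfrac{2(2p-1-n)}{\sqrt d}$, and such $\lambda$ exists if and only if $(n-2p+2)d<4(3p-2)(2p-1-n)$. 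Writing $m=n-2p+2\in(0,1)$, this reads $m<\tfrac{4(3p-2)}{d+4(3p-2)}<1$, so the condition fails as soon as $n$ is close to $2p-1$ (e.g.\ for $p=2,\ d=1,\ n=2.99$: $0.99\not<0.16$). This is not a matter of the constants $C_1,C_2$ blowing up; the absorption simply cannot be performed, so the identity does not yield the estimate at all for $n$ in the upper part of the interval. Moreover your bound $F\le dB$ introduces a $d$-dependence that the statement ($C_i=C_i(n,p)$) explicitly excludes, whereas the paper's proof of this proposition uses no dimension-dependent inequality.

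The paper avoids this by not estimating $J_2$ directly. Instead it performs a second integration by parts on $J_2$, using $|\nabla u|^{3p-2}=|\nabla u|^{3p-4}\bigl(\diverg(u\nabla u)-u\Delta u\bigr)$, which yields the exact identity
\begin{equation*}
(n-2p+2)J_2 \;=\; -3p\,\tilde J_1 - (3p-4)\,\tilde J_2 - I_3 - F,
\end{equation*}
with $\tilde J_1:=\int\phi^{3p-1}u^{n-2p+2}|\nabla u|^{3p-4}\Delta u\,\nabla\phi\cdot\nabla u\,dx$ and $\tilde J_2:=\int\phi^{3p}u^{n-2p+2}|\nabla u|^{3p-6}\Delta u\,D^2u\nabla u\cdot\nabla u\,dx$. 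The crucial gain is that $F$ appears here with a definite sign, so after substitution $F$ sits on the helpful side of the inequality (this is the term $\mathcal{I}_4$ in the paper, whose sign is controlled by $p<\tfrac{19}{3}$). The remaining pieces are then benign: $\tilde J_1$ involves $\nabla\phi$ and goes into $E$, and $\tilde J_2$ is Cauchy--Schwarz'd into $A$ and $F$. Without this step your plan establishes the proposition only on the strictly smaller, dimension-dependent interval $2p-2<n<2p-2+\tfrac{4(3p-2)}{d+4(3p-2)}$.
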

\smallskip

Note that Proposition \ref{prop:Bernis_2p-2-2p-1} works for any dimension $d \geq 1$. 

\begin{remark}
The constants $C_1$ and $C_2$ in Proposition \ref{prop:Bernis_2p-2-2p-1} are given by
\begin{equation*}
\begin{split}
    C_1
    &=
    \max\left\{
    \frac{3p}{2} \left|\frac{2(3p-1)}{(n-2p+1)(n-2p+2)}\right|^p,
    \right.
    \\
    & \quad
    \left.
    p^\frac{3p}{2} \left(\frac{9p^2}{2(n-2p+1)^2}+\frac{9p^2 (3p-2)^2}{\eps_2(n-2p+1)^2 (n-2p+2)^2}
    +
    \frac{9p^2}{\eps_3(n-2p+1)^2 (n-2p+2)^2}
    \right)^\frac{3p}{2}
    \right\}
\end{split}
\end{equation*}
and
\begin{equation*}
    C_2
    =
    \frac{5}{3p}
    +
    C_1 \left[
    \frac{p-1}{p} \left(\frac{3p-1}{C_{\min}}\right)^\frac{p}{p-1}
    +
    \frac{3p-2}{3p}
    \left(\frac{C_{\delta_0} + C_{\delta_1} + C_{\delta_2}}{C_{\min}}\right)^\frac{3p}{3p-2}
    +
    \frac{1}{C_{\min}}
    \right],
\end{equation*}
where 
\begin{equation*}
    C_{\delta_0} = \frac{9 p^2 (n-2p+2)^2}{4} > 0,
\end{equation*}
and $C_{\min}, C_{\delta_1}$, and $C_{\delta_2}$ are positive constants depending only on the fixed parameter $p$, but not on $n$.
Hence, both $C_1$ and $C_2$ blow up as $n$ goes to $2p-2$. 
Moreover, we need  $n > 2p-2$  in the proof to guarantee that the involved integrals have the `right’ sign. However, it turns out that the lower bound $2p-2$ can be slightly improved (see Proposition \ref{prop:Bernis_smaller-2p-2} below).
\end{remark}
\smallskip

\begin{proof}
We first prove \eqref{eq:BernisLemma23-1}. 
To this end, we use the identity $\nabla u \cdot \nabla u = \diverg(u \nabla u) - u \Delta u$
and integration by parts, together with the boundary condition for $u$, to rewrite the left-hand side of \eqref{eq:BernisLemma23-1} as 
\begin{equation*}
\begin{split}
    \int_{\Omega} \phi^{3p} u^{n-2p} |\nabla u|^{3p} \dd x 
    & = \int_{\Omega} \phi^{3p}  u^{n-2p} |\nabla u|^{3p-2} \nabla u \cdot \nabla u \dd x \\
    &=
    \int_{\Omega} \phi^{3p}  u^{n-2p} |\nabla u|^{3p-2} \diverg(u \nabla u) \dd x
    -
    \int_{\Omega} \phi^{3p}  u^{n-2p+1} |\nabla u|^{3p-2} \Delta u \dd x
    \\
    & = -3p\int_{\Omega} \phi^{3p-1} u^{n-2p+1}|\nabla u|^{3p-2} \nabla \phi \cdot \nabla u \dd x  - (n-2p) \int_{\Omega}\phi^{3p} u^{n-2p} |\nabla u|^{3p} \dd x \\
    & \quad - (3p-2) \int_{\Omega}\phi^{3p} u^{n-2p+1} |\nabla u|^{3p-4}  D^2 u\nabla u \cdot \nabla u \dd x -\int_{\Omega}\phi^{3p} u^{n-2p+1} |\nabla u|^{3p-2} \Delta u \dd x.
\end{split}
\end{equation*}
Since the second integrand on the right-hand side equals the integrand on the left-hand side and since $n < 2p-1$ by assumption, we find that
\begin{equation}\label{eq:proof-BernisProp23-1.}
    \begin{split}
        \int_{\Omega} \phi^{3p} u^{n-2p} |\nabla u|^{3p} \dd x & = - \frac{3p}{n-2p+1} \int_{\Omega}\phi^{3p-1} u^{n-2p+1}|\nabla u|^{3p-2} \nabla \phi \cdot \nabla u \dd x \\
        & \quad - \frac{3p-2}{n-2p+1} \int_{\Omega} \phi^{3p} u^{n-2p+1} |\nabla u|^{3p-4}  D^2 u\nabla u \cdot \nabla u \dd x \\
        & \quad - \frac{1}{n-2p+1}\int_{\Omega} \phi^{3p} u^{n-2p+1} |\nabla u|^{3p-2} \Delta u \dd x.
    \end{split}
\end{equation}
In the last term of \eqref{eq:proof-BernisProp23-1.} we use again the identity $\nabla u \cdot \nabla u = \diverg(u \nabla u) - u \Delta u$ and integrate by parts to obtain 
\begin{equation*}
    \begin{split}
        &\int_{\Omega} \phi^{3p} u^{n-2p+1} |\nabla u|^{3p-2} \Delta u \dd x\\
        =\ &
        \int_\Omega \phi^{3p}  u^{n-2p+1} |\nabla u|^{3p-4} \diverg(u \nabla u) \Delta u \dd x
        -
        \int_\Omega \phi^{3p}  u^{n-2p+2} |\nabla u|^{3p-4} |\Delta u|^2 \dd x
        \\
        =\ & 
        -3p \int_{\Omega}  \phi^{3p-1} u^{n-2p+2} |\nabla u|^{3p-4} \Delta u \nabla \phi \cdot \nabla u \dd x 
        -
        \int_{\Omega} \phi^{3p} u^{n-2p+2} |\nabla u|^{3p-4} |\Delta u|^2 \dd x\\
        &
        - (n-2p+1) 
        \int_{\Omega} \phi^{3p} u^{n-2p+1} |\nabla u|^{3p-2} \Delta u \dd x 
        - (3p-4) \int_{\Omega} \phi^{3p} u^{n-2p+2} |\nabla u|^{3p-6} \Delta u D^2 u \nabla u\cdot \nabla u \dd x \\
        & - 
        \int_{\Omega} \phi^{3p} u^{n-2p+2} |\nabla u|^{3p-4} \nabla \Delta u \cdot \nabla u \dd x.   
    \end{split}
\end{equation*}
Noting that the integrand on the left-hand side equals the third integrand on the right-hand side and recalling that $n > 2p-2$ by assumption, we deduce 
\begin{equation*}
    \begin{split}
        & 
        \int_{\Omega} \phi^{3p} u^{n-2p+2} |\nabla u|^{3p-2} \Delta u \dd x \\
        =\ &
        -\frac{3p}{n-2p+2} \int_{\Omega}  \phi^{3p-1} u^{n-2p+2} |\nabla u|^{3p-4} \Delta u \nabla \phi \cdot \nabla u \dd x 
        - 
        \frac{1}{n-2p+2} \int_{\Omega} \phi^{3p} u^{n-2p+2} |\nabla u|^{3p-4} |\Delta u|^2 \dd x  \\
        & - \frac{3p-4}{n-2p+2} \int_{\Omega} \phi^{3p} u^{n-2p+2} |\nabla u|^{3p-6} \Delta u D^2 u \nabla u\cdot \nabla u \dd x  
        \\
        &
        - \frac{1}{n-2p+2} \int_{\Omega} \phi^{3p} u^{n-2p+2} |\nabla u|^{3p-4} \nabla \Delta u \cdot \nabla u \dd x.
    \end{split}
\end{equation*}
Inserting this into \eqref{eq:proof-BernisProp23-1.}, we obtain 
\begin{equation}\label{eq:proof-BernisProp23-2}
    \begin{split}
        \int_{\Omega}\phi^{3p} u^{n-2p} |\nabla u|^{3p} \dd x & = - \frac{3p}{n-2p+1} \int_{\Omega}\phi^{3p-1} u^{n-2p+1}|\nabla u|^{3p-2} \nabla \phi \cdot \nabla u \dd x\\
        & \quad - \frac{3p-2}{n-2p+1} \int_{\Omega} \phi^{3p} u^{n-2p+1} |\nabla u|^{3p-4}  D^2 u\nabla u \cdot \nabla u \dd x \\
        & \quad + \frac{3p}{(n-2p+1)(n-2p+2)} \int_{\Omega}  \phi^{3p-1} u^{n-2p+2} |\nabla u|^{3p-4} \Delta u \nabla \phi \cdot \nabla u \dd x \\
        & \quad + \frac{1}{(n-2p+1)(n-2p+2)} \int_{\Omega} \phi^{3p} u^{n-2p+2} |\nabla u|^{3p-4} |\Delta u|^2 \dd x  \\
        & \quad + \frac{3p-4}{(n-2p+1)(n-2p+2)} \int_{\Omega} \phi^{3p} u^{n-2p+2} |\nabla u|^{3p-6} \Delta u D^2 u \nabla u\cdot \nabla u \dd x \\
        & \quad + \frac{1}{(n-2p+1)(n-2p+2)} \int_{\Omega} \phi^{3p} u^{n-2p+2} |\nabla u|^{3p-4} \nabla \Delta u \cdot \nabla u \dd x.
    \end{split}
\end{equation}
Next, we use Lemma \ref{lem:BernisLemma21} to rewrite the second term on the right-hand side of \eqref{eq:proof-BernisProp23-2} as follows
\begin{equation}\label{eq:proof-BernisProp23-3}
    \begin{split}
        & - \frac{3p-2}{n-2p+1} \int_{\Omega} \phi^{3p} u^{n-2p+1} |\nabla u|^{3p-4} D^2u\nabla u\cdot \nabla u \dd x \\
        =\ & \frac{(3p-2)(3p-4)}{(n-2p+1)(n-2p+2)} \int_{\Omega} \phi^{3p} u^{n-2p+2} |\nabla u|^{3p-6} |D^2u\nabla u|^2  \dd x \\
        & 
        +  \frac{3p(3p-2)}{(n-2p+1)(n-2p+2)}
        \int_{\Omega} \phi^{3p-1} u^{n-2p+2}|\nabla u|^{3p-4} \nabla\phi \cdot D^2u \nabla u \dd x  \\ 
        & 
        + \frac{3p-2}{(n-2p+1)(n-2p+2)} \int_{\Omega} \phi^{3p} u^{n-2p+2} |\nabla u|^{3p-4} |D^2u|^2 \dd x  \\
        & + \frac{3p-2}{(n-2p+1)(n-2p+2)} \int_{\Omega} \phi^{3p} u^{n-2p+2} |\nabla u|^{3p-4} \nabla u \cdot \nabla\Delta u \dd x \\ 
        & + \frac{3p-2}{(n-2p+1)(n-2p+2)} \int_{\partial \Omega} \phi^{3p} u^{n-2p+2} |\nabla u|^{3p-4} \mathrm{II}(\nabla u,\nabla u) \dd \mathcal{H}^{d-1}.
    \end{split}
\end{equation}
Applying now the Cauchy--Schwarz inequality and Young's inequality with weight $\varepsilon_0 =\tfrac{1}{15}$ on the fifth term on the right-hand side of \eqref{eq:proof-BernisProp23-2} we obtain 
\begin{equation}\label{eq:proof-BernisProp23-4} 
\begin{split}
    &
    -\int_{\Omega} \phi^{3p} u^{n-2p+2} |\nabla u|^{3p-6} \Delta u D^2 u \nabla u\cdot \nabla u \dd x
    \\
    \leq\ &
    \left|
    \int_{\Omega} \phi^{3p} u^{n-2p+2} |\nabla u|^{3p-6} \Delta u D^2 u \nabla u\cdot \nabla u \dd x
    \right|
    \\
    \leq\ & 
    \frac{1}{15} \int_{\Omega} \phi^{3p} u^{n-2p+2} |\nabla u|^{3p-4} |\Delta u|^2 \dd x 
    + \frac{15}{4}
    \int_{\Omega} \phi^{3p} u^{n-2p+2} |\nabla u|^{3p-6} |D^2u\nabla u|^2 \dd x.
\end{split}
\end{equation} 
Inserting \eqref{eq:proof-BernisProp23-3} back into \eqref{eq:proof-BernisProp23-2} and using \eqref{eq:proof-BernisProp23-4} yields 
\begin{equation}\label{eq:proof-BernisProp23-5}
    \begin{split}
        & \int_{\Omega} \phi^{3p} u^{n-2p} |\nabla u|^{3p} \dd x \\
        \leq\ & 
        \frac{(3p-4)(12p-23)}{4(n-2p+1)(n-2p+2)}
        \int_{\Omega} \phi^{3p} u^{n-2p+2} |\nabla u|^{3p-6} |D^2u\nabla u|^2 \dd x \\
        & + \frac{3p-2}{(n-2p+1)(n-2p+2)} \int_{\Omega} \phi^{3p} u^{n-2p+2} |\nabla u|^{3p-4} |D^2u|^2 \dd x \\
        & + 
        \frac{3p-1}{(n-2p+1)(n-2p+2)}
        \int_{\Omega} \phi^{3p} u^{n-2p+2} |\nabla u|^{3p-4} \nabla u \cdot \nabla \Delta u \dd x \\
        & + 
        \frac{19-3p}{15(n-2p+1)(n-2p+2)}
        \int_{\Omega} \phi^{3p} u^{n-2p+2} |\nabla u|^{3p-4} |\Delta u|^2 \dd x \\
        & + \frac{3p-2}{(n-2p+1)(n-2p+2)} \int_{\partial\Omega} \phi^{3p} u^{n-2p+2} |\nabla u|^{3p-4} \mathrm{II}(\nabla u, \nabla u) \dd \mathcal{H}^{d-1} \\
        & - \frac{3p}{n-2p+1} \int_{\Omega}\phi^{3p-1} u^{n-2p+1}|\nabla u|^{3p-2} \nabla \phi \cdot \nabla u \dd x \\
        & + \frac{3p(3p-2)}{(n-2p+1)(n-2p+2)} \int_{\Omega} \phi^{3p-1} u^{n-2p+2}|\nabla u|^{3p-4} \nabla\phi \cdot D^2u \nabla u \dd x \\
        & + \frac{3p}{(n-2p+1)(n-2p+2)} \int_{\Omega}  \phi^{3p-1} u^{n-2p+2} |\nabla u|^{3p-4} \Delta u \nabla \phi \cdot \nabla u \dd x
        \\
        \eqqcolon & \
        \mathcal{I}_1
        +
        \mathcal{I}_2
        +
        \mathcal{I}_3
        +
        \mathcal{I}_4
        +
        \mathcal{I}_5
        +
        \mathcal{I}_6
        +
        \mathcal{I}_7
        +
        \mathcal{I}_8.
    \end{split}
\end{equation}
Due to the choices of $p \geq 2$ and $2p-2 < n < 2p-1$ the pre factors of $\mathcal{I}_1, \mathcal{I}_2$ and $\mathcal{I}_5$ are negative, while the integrals are non-negative. The non-negativity of \(\mathrm{II}(\nabla u,\nabla u)\) follows from the convexity of the domain, see Lemma \ref{lem:II}. Moreover, the choices of $p < \frac{19}{3}$ and $2p-2 < n < 2p-1$ yield $\mathcal{I}_4 \leq 0$. We directly use $\mathcal{I}_2, \mathcal{I}_5 \leq 0$ in \eqref{eq:proof-BernisProp23-5}.
To control the last three integrals $\mathcal{I}_6, \mathcal{I}_7$ and $\mathcal{I}_8$, we use the Cauchy--Schwarz inequality and Young's inequality such that $\mathcal{I}_6$ can be partly absorbed by the integral on the left-hand side of \eqref{eq:proof-BernisProp23-5} and the integrals $\mathcal{I}_7$ and $\mathcal{I}_8$ are absorbed by $\mathcal{I}_1$, respectively $\mathcal{I}_4$. Indeed,
applying the Cauchy-Schwarz inequality and Young's inequality with weight $\varepsilon_1 = \frac{1}{2}$ to $\mathcal{I}_6$, we have
\begin{equation}\label{eq:eps_1}
\begin{split}
    &
    - \frac{3p}{n-2p+1} \int_{\Omega}\phi^{3p-1} u^{n-2p+1}|\nabla u|^{3p-2} \nabla \phi \cdot \nabla u \dd x 
    \\
    \leq\ & 
    \frac{1}{2} \int_{\Omega} \phi^{3p} u^{n-2p}|\nabla u|^{3p} \dd x  
    + C_{\eps_1}
     \int_{\Omega}  \phi^{3p-2} |\nabla \phi|^2 u^{n-2p+2} |\nabla u|^{3p-2} \dd x
\end{split}
\end{equation}
with $C_{\varepsilon_1}=C(\varepsilon_1,n,p):=\tfrac{9p^2}{2(n-2p+1)^2}>0$.
Inserting this inequality in \eqref{eq:proof-BernisProp23-5}, we
obtain 
\begin{equation}\label{eq:proof-BernisProp23-5.}
    \begin{split}
        \int_{\Omega} \phi^{3p} u^{n-2p} |\nabla u|^{3p} \dd x 
        \leq
        2(\mathcal{I}_1
        +
        \mathcal{I}_3
        +
        \mathcal{I}_4
        +
        \mathcal{I}_7
        +
        \mathcal{I}_8) 
        + 
        2 C_{\eps_1}
        \int_{\Omega}  \phi^{3p-2} |\nabla \phi|^2 u^{n-2p+2} |\nabla u|^{3p-2} \dd x.
    \end{split}
\end{equation}
We can further estimate $\mathcal{I}_7$ by 
\begin{equation}
\label{eq:eps_2}
\begin{split}
    &
    \left|\frac{6p(3p-2)}{(n-2p+1)(n-2p+2)}\int_{\Omega} \phi^{3p-1} u^{n-2p+2}|\nabla u|^{3p-4} \nabla\phi \cdot D^2u \nabla u \dd x \right|
    \\ 
    \leq\ &  
    \eps_2 \int_{\Omega} \phi^{3p} u^{n-2p+2}|\nabla u|^{3p-6} |D^2u\nabla u|^2 \dd x 
    + C_{\eps_2} \int_{\Omega} \phi^{3p-2} |\nabla \phi|^2 u^{n-2p+2} |\nabla u|^{3p-2} \dd x,
\end{split}
\end{equation}
as well as $\mathcal{I}_8$ by 
\begin{equation}\label{eq:eps_3}
\begin{split} 
    & \left| \frac{6p}{(n-2p+1)(n-2p+2)} \int_{\Omega}  \phi^{3p-1} u^{n-2p+2} |\nabla u|^{3p-4} \Delta u \nabla \phi \cdot \nabla u \dd x \right|
    \\
    \leq\ & 
    \eps_3 \int_{\Omega} \phi^{3p} u^{n-2p+2} |\nabla u|^{3p-4} |\Delta u|^2 \dd x  + C_{\eps_3} \int_{\Omega}\phi^{3p-2} |\nabla \phi|^2 u^{n-2p+2}|\nabla u|^{3p-2} \dd x,
    \end{split}
\end{equation}
where $\eps_2, \eps_3 > 0$ can be chosen arbitrarily and where $C_{\eps_2}, C_{\eps_3} > 0$ are given by 
\[
C_{\eps_2}=C(\eps_2,n,p)
\coloneqq \frac{9p^2(3p-2)^2}{\eps_2(n-2p+1)^2(n-2p+2)^2}
\]
and
\[
C_{\eps_3}=C(\eps_3,n,p)
\coloneqq \frac{9p^2}{\eps_3(n-2p+1)^2(n-2p+2)^2}.
\]
We choose 
$\eps_2 = \eps_2(n,p)$ and $\eps_3 = \eps_3(n,p)$ small enough so that 
\begin{equation*}
    0 < \eps_2 < \left|\frac{(3p-4)(12p-23)}{2(n-2p+1)(n-2p+2)}\right| 
    \quad \text{and}\quad 
    0 < \eps_3 < \left|\frac{2(19-3p)}{15(n-2p+1)(n-2p+2)}\right|.
\end{equation*}
Using \eqref{eq:eps_2} and \eqref{eq:eps_3} in \eqref{eq:proof-BernisProp23-5.}, we can absorb the terms in \eqref{eq:proof-BernisProp23-5.} to get the estimate
\begin{equation}\label{eq:younghoelder}
    \begin{split}
        \int_{\Omega} \phi^{3p} u^{n-2p} |\nabla u|^{3p} \dd x 
        &
        \leq 
        \frac{2(3p-1)}{(n-2p+1)(n-2p+2)} \int_{\Omega} \phi^{3p}  u^{n-2p+2} |\nabla u|^{3p-4} \nabla u \cdot \nabla \Delta u \dd x 
        \\
        & \quad
        + 
        \bigl(2C_{\eps_1} + C_{\eps_2} + C_{\eps_3}\bigr)  \int_{\Omega} \phi^{3p-2}|\nabla \phi|^2 u^{n-2p+2} |\nabla u|^{3p-2} \dd x.
    \end{split}
\end{equation}
Applying Hölder's inequality to both integrals on the right-hand side of \eqref{eq:younghoelder}, we obtain
\begin{equation}\label{eq:hoelder}
    \begin{split}
    &\int_{\Omega} \phi^{3p} u^{n-2p} |\nabla u|^{3p} \dd x \\
    \leq\ & 
    \left| \frac{2(3p-1)}{(n-2p+1)(n-2p+2)} \right| \left(\int_{\Omega} \phi^{3p}  u^{n-2p} |\nabla u|^{3p} \dd x \right)^{\frac{p-1}{p}} \left(\int_{\Omega}\phi^{3p}  u^{n} |\nabla\Delta u|^{p}\dd x\right)^{\frac{1}{p}} \\
    & + \bigl(2C_{\eps_1} + C_{\eps_2} + C_{\eps_3}\bigr) \left(\int_{\Omega} \phi^{3p} u^{n-2p}|\nabla u|^{3p} \dd x  \right)^{\frac{3p-2}{3p}}\left(\int_{\Omega} |\nabla \phi|^{3p} u^{n+p}\dd x\right)^{\frac{2}{3p}}.
    \end{split}
\end{equation}
We then use Young's inequality 
on the first line on the right-hand side of \eqref{eq:hoelder} to deduce 
\begin{equation*}
    \begin{split}
    &\int_{\Omega} \phi^{3p} u^{n-2p} |\nabla u|^{3p} \dd x 
    \\
    \leq\ & \frac{p-1}{p} \int_{\Omega} \phi^{3p} u^{n-2p} |\nabla u|^{3p} \dd x  + \frac{1}{p} \left| \frac{2(3p-1)}{(n-2p+1)(n-2p+2)} \right|^p  \int_{\Omega} \phi^{3p}  u^{n} |\nabla \Delta u|^{p} \dd x 
     \\
     & +  \bigl(2C_{\eps_1} + C_{\eps_2} + C_{\eps_3}\bigr) \left(\int_{\Omega} \phi^{3p} u^{n-2p}|\nabla u|^{3p} \dd x  \right)^{\frac{3p-2}{3p}}\left(\int_{\Omega} |\nabla \phi|^{3p} u^{n+p}\dd x\right)^{\frac{2}{3p}},
    \end{split}
\end{equation*}
which immediately gives  
\begin{equation*}
    \begin{split}
    \int_{\Omega} \phi^{3p} u^{n-2p} |\nabla u|^{3p} \dd x 
     &\leq  \left| \frac{2(3p-1)}{(n-2p+1)(n-2p+2)} \right|^p  \int_{\Omega} \phi^{3p}  u^{n} |\nabla \Delta u|^{p} \dd x 
     \\
     &\quad + p \bigl(2C_{\eps_1} + C_{\eps_2} + C_{\eps_3}\bigr) \left(\int_{\Omega} \phi^{3p} u^{n-2p}|\nabla u|^{3p} \dd x  \right)^{\frac{3p-2}{3p}}\left(\int_{\Omega} |\nabla \phi|^{3p} u^{n+p}\dd x\right)^{\frac{2}{3p}}.
    \end{split}
    \end{equation*}
Applying again Young's inequality,
but now to the second line on the right-hand side of the above inequality, leads to 
\begin{equation*}
    \begin{split}
    \int_{\Omega} \phi^{3p} u^{n-2p} |\nabla u|^{3p} \dd x 
     &\leq \frac{3p}{2} \left| \frac{2(3p-1)}{(n-2p+1)(n-2p+2)} \right|^p  \int_{\Omega} \phi^{3p}  u^{n} |\nabla \Delta u|^{p} \dd x 
     \\
     &\quad + p^{\frac{3p}{2}} \bigl(2C_{\eps_1} + C_{\eps_2} + C_{\eps_3}\bigr)^{\frac{3p}{2}}\int_{\Omega} |\nabla \phi|^{3p} u^{n+p}\dd x,
    \end{split}
\end{equation*}
and from this we conclude estimate \eqref{eq:BernisLemma23-1} with the constant $C_1 > 0$ given by
\begin{equation*}
\begin{split}
    C_1
    &=
    \max\left\{
    \frac{3p}{2} \left|\frac{2(3p-1)}{(n-2p+1)(n-2p+2)}\right|^p,
    \right.
    \\
    & \quad
    \left.
    p^\frac{3p}{2} \left(\frac{9p^2}{(n-2p+1)^2}
    +
    \frac{9p^2 (3p-2)^2}{\eps_2(n-2p+1)^2 (n-2p+2)^2}
    +
    \frac{9p^2}{\eps_3(n-2p+1)^2 (n-2p+2)^2}
    \right)^\frac{3p}{2}
    \right\}.
\end{split}
\end{equation*}
It remains to prove \eqref{eq:BernisLemma23-2}.
Noting that the left-hand side in \eqref{eq:proof-BernisProp23-5} is non-negative and multiplying \eqref{eq:proof-BernisProp23-5}  by $(n-2p+1)(n-2p+2)<0$ gives 
$$ 
(n-2p+1)(n-2p+2)\sum\limits_{i=1}^8 \mathcal{I}_i \leq 0
$$ 
and by rearranging this inequality, we see that 
\begin{align}\label{eq:proof-BernisProp23-6}
    & \frac{(3p-4)(12p-23)}{4} \int_{\Omega} \phi^{3p}u^{n-2p+2} |\nabla u|^{3p-6} |D^2u\nabla u|^2 \dd x  + (3p-2)\int_{\Omega} \phi^{3p} u^{n-2p+2} |\nabla u|^{3p-4} |D^2u|^2 \dd x \nonumber\\
    & + \frac{19-3p}{15} \int_{\Omega} \phi^{3p} u^{n-2p+2} |\nabla u|^{3p-4} |\Delta u|^2 \dd x + (3p-2) \int_{\partial\Omega} \phi^{3p} u^{n-2p+2} |\nabla u|^{3p-4} \mathrm{II}(\nabla u, \nabla u) \dd \mathcal{H}^{d-1} \nonumber\\
    \leq\ & - (3p-1) \int_{\Omega} \phi^{3p} u^{n-2p+2} |\nabla u|^{3p-4} \nabla u\cdot \nabla \Delta u\dd x \nonumber \\
    & + 3p(n-2p+2) \int_{\Omega}\phi^{3p-1} u^{n-2p+1}|\nabla u|^{3p-2} \nabla \phi \cdot \nabla u \dd x \\
    & - 3p(3p-2)\int_{\Omega} \phi^{3p-1} u^{n-2p+2}|\nabla u|^{3p-4} \nabla\phi \cdot D^2u \nabla u \dd x \nonumber\\
    & - 3p\int_{\Omega}  \phi^{3p-1} u^{n-2p+2} |\nabla u|^{3p-4} \Delta u \nabla \phi \cdot \nabla u \dd x \nonumber 
    \\
    \eqqcolon &\
    \mathcal{J}_1
    +
    \mathcal{J}_2
    +
    \mathcal{J}_3
    +
    \mathcal{J}_4. \nonumber
\end{align}
Now, we estimate the terms $\mathcal{J}_2$, $\mathcal{J}_3$, and $\mathcal{J}_4$ by similar arguments as those used for \eqref{eq:eps_1}, \eqref{eq:eps_2} and \eqref{eq:eps_3}. Indeed, to estimate $\mathcal{J}_2$, we use Young's inequality with $\delta_0=1$ and \eqref{eq:BernisLemma23-1}. We obtain
\begin{equation}\label{eq:delta_0}
\mathcal{J}_2 \leq C_{\delta_0} \int_{\Omega} \phi^{3p-2} |\nabla \phi|^2 u^{n-2p+2} |\nabla u|^{3p-2} \dd x + C_1 \int_{\Omega} \phi^{3p} u^n |\nabla \Delta u|^p \dd x
+ C_1 \int_{\Omega} |\nabla \phi|^{3p} u^{n+p} \dd x
\end{equation}
with $C_{\delta_0}=C(\delta_0,n,p) \coloneqq \tfrac{9p^2(n-2p+2)^2}{4} >0$. 
For $\mathcal{J}_3$ and $\mathcal{J}_4$, we obtain the estimates 
\begin{equation}\label{eq:delta_1}
    \mathcal{J}_3 \leq \delta_1 \int_{\Omega} \phi^{3p}u^{n-2p+2} |\nabla u|^{3p-6} |D^2u\nabla u|^2 \dd x + C_{\delta_1} \int_{\Omega} \phi^{3p-2} |\nabla \phi|^2 u^{n-2p+2} |\nabla u|^{3p-2} \dd x
\end{equation}
and 
\begin{equation}\label{eq:delta_2}
    \mathcal{J}_4 \leq \delta_2 \int_{\Omega} \phi^{3p}u^{n-2p+2} |\nabla u|^{3p-4} |\Delta u|^2 \dd x + C_{\delta_2} \int_{\Omega} \phi^{3p-2} |\nabla \phi|^2 u^{n-2p+2} |\nabla u|^{3p-2} \dd x,
\end{equation}
where $\delta_1,\delta_2>0$ can be chosen arbitrarily 
and with 
$$
C_{\delta_1}=C(\delta_1,p):= \frac{9p^2(3p-2)^2}{4\delta_1}>0 \quad \text{and} \quad C_{\delta_2}=C(\delta_2,p):= \frac{9p^2}{4\delta_2}>0. 
$$
Inserting \eqref{eq:delta_0}--\eqref{eq:delta_2} into \eqref{eq:proof-BernisProp23-6} and choosing 
\begin{equation*}
    0<\delta_1< \frac{(3p-4)(12p-23)}{8}
    \quad \text{and} \quad 0<\delta_2<\frac{19-3p}{30}
\end{equation*}
yields 
\begin{align}\label{eq:proof-BernisProp23-7}
    & \frac{(3p-4)(12p-23)}{8} \int_{\Omega} \phi^{3p}u^{n-2p+2} |\nabla u|^{3p-6} |D^2u\nabla u|^2 \dd x  + (3p-2)\int_{\Omega} \phi^{3p} u^{n-2p+2} |\nabla u|^{3p-4} |D^2u|^2 \dd x \nonumber \\
    & + \frac{19-3p}{30} \int_{\Omega} \phi^{3p} u^{n-2p+2} |\nabla u|^{3p-4} |\Delta u|^2 \dd x + (3p-2) \int_{\partial\Omega} \phi^{3p} u^{n-2p+2} |\nabla u|^{3p-4} \mathrm{II}(\nabla u, \nabla u) \dd \mathcal{H}^{d-1} \nonumber \\
    \leq\ & - (3p-1) \int_{\Omega} \phi^{3p} u^{n-2p+2} |\nabla u|^{3p-4} \nabla u\cdot \nabla \Delta u\dd x \\
    & + (C_{\delta_0}+C_{\delta_1}+C_{\delta_2}) \int_{\Omega} \phi^{3p-2}
    |\nabla \phi|^2 u^{n-2p+2} |\nabla u|^{3p-2} \dd x \nonumber \\
    & + C_1 \int_{\Omega} \phi^{3p} u^n |\nabla \Delta u|^p \dd x +  C_1\int_{\Omega} |\nabla \phi|^{3p} u^{n+p} \dd x.  \nonumber
\end{align}
Setting 
$$
C_{\min} \coloneqq \min \left\{ \frac{(3p-4)(12p-23)}{8}, \frac{19-3p}{30}, 3p-2\right\}
>0
$$
and, applying Hölder's and Young's inequalities, further yields
    \begin{align*}
        & \int_{\Omega} \phi^{3p}u^{n-2p+2} |\nabla u|^{3p-6} |D^2u\nabla u|^2 \dd x  + \int_{\Omega} \phi^{3p} u^{n-2p+2} |\nabla u|^{3p-4} |D^2u|^2 \dd x \nonumber \\
        & +  \int_{\Omega} \phi^{3p} u^{n-2p+2} |\nabla u|^{3p-4} |\Delta u|^2 \dd x +  \int_{\partial\Omega} \phi^{3p} u^{n-2p+2} |\nabla u|^{3p-4} \mathrm{II}(\nabla u, \nabla u) \dd \mathcal{H}^{d-1} \nonumber \\
        \leq\ & 
        \frac{(3p-1)}{C_{\min}}
        \left(\int_{\Omega} \phi^{3p}  u^{n-2p} |\nabla u|^{3p} \dd x \right)^{\frac{p-1}{p}} \left(\int_{\Omega}\phi^{3p}  u^{n} |\nabla\Delta u|^{p}\dd x\right)^{\frac{1}{p}} \\
        & + \frac{\bigl(C_{\delta_0} + C_{\delta_1} + C_{\delta_2}\bigr)}{C_{\min}} \left(\int_{\Omega} \phi^{3p} u^{n-2p}|\nabla u|^{3p} \dd x  \right)^{\frac{3p-2}{3p}}\left(\int_{\Omega} |\nabla \phi|^{3p} u^{n+p}\dd x\right)^{\frac{2}{3p}} \\
        & + \frac{C_1}{C_{\min}} \int_{\Omega} \phi^{3p} u^n |\nabla \Delta u|^p \dd x +  \frac{C_1}{C_{\min}}\int_{\Omega} |\nabla \phi|^{3p} u^{n+p} \dd x  \nonumber
        \\
        \leq\ &
        \frac{p-1}{p}
        \left(\frac{3p-1}{C_{\min}}\right)^\frac{p}{p-1}
        \int_{\Omega} \phi^{3p}  u^{n-2p} |\nabla u|^{3p} \dd x 
        +
        \frac{1}{p}
        \int_{\Omega}\phi^{3p}  u^{n} |\nabla\Delta u|^{p}\dd x
        \\
        & +
        \frac{3p-2}{3p}
        \left(\frac{C_{\delta_0} + C_{\delta_1} + C_{\delta_2}}{C_{\min}}\right)^\frac{3p}{3p-2}
        \int_{\Omega} \phi^{3p} u^{n-2p}|\nabla u|^{3p} \dd x  
        +
        \frac{2}{3p} \int_{\Omega} |\nabla \phi|^{3p} u^{n+p}\dd x
        \\
        & + \frac{C_1}{C_{\min}} \int_{\Omega} \phi^{3p} u^n |\nabla \Delta u|^p \dd x +  \frac{C_1}{C_{\min}}\int_{\Omega} |\nabla \phi|^{3p} u^{n+p} \dd x.
\end{align*}
Using \eqref{eq:BernisLemma23-1}, we finally obtain the estimate \eqref{eq:BernisLemma23-2} with the constant $C_2 > 0$ given by
\begin{equation*}
    C_2
    \coloneqq
    \frac{5}{3p}
    +
    C_1 \left[
    \frac{p-1}{p} \left(\frac{3p-1}{C_{\min}}\right)^\frac{p}{p-1}
    +
    \frac{3p-2}{3p}
    \left(\frac{C_{\delta_0} + C_{\delta_1} + C_{\delta_2}}{C_{\min}}\right)^\frac{3p}{3p-2}
    +
    \frac{1}{C_{\min}}
    \right].
\end{equation*} 
\end{proof}
\smallskip

With the following result we extend the lower bound for $n$, obtained in Proposition \ref{prop:Bernis_2p-2-2p-1}.

\begin{proposition}\label{prop:Bernis_smaller-2p-2}
Let $\Omega\subset \R^d$ be a bounded, convex and smooth domain, and let $2\leq p <\frac{19}{3}$ be fixed. Assume that $u\in C^{\infty}(\bar{\Omega})$ is positive and satisfies $\nabla u \cdot \nu = 0$ on \(\partial \Omega\), and let \(\phi \in W^{1}_\infty(\Omega)\) be non-negative. If
\begin{equation*}
    2p - 2\frac{2 + \sqrt{(3p-2)^2 + d(3p-4)}}{-3p+6 + \sqrt{(3p-2)^2 + d(3p-4)}} < n < 2p - 2 \frac{6p-6 + \sqrt{(3p-2)^2 + d(3p-4)}}{9p-10+\sqrt{(3p-2)^2 + d(3p-4)}},
\end{equation*}  
then there exist positive constants \(C_i=C_i(n,p,d) >0,\ i=3,4,\)    
such that the following estimates hold:
\begin{equation}\label{eq:BernisLemma2.4_I}
    \int_{\Omega} \phi^{3p}u^{n-2p} |\nabla u|^{3p} \dd x
    \leq
    C_3\left[ \int_{\Omega} \phi^{3p} u^n|\nabla \Delta u|^p \dd x + \int_{\Omega} |\nabla \phi|^{3p} u^{n+p} \dd x\right]
\end{equation}
and 
\begin{equation}\label{eq:BernisLemma2.4_II}
    \begin{split}
        & 
        \int_{\Omega} \phi^{3p} u^{n-2p+2} |\nabla u|^{3p-6} |D^2u \nabla u|^2 \dd x
        +
        \int_{\Omega} \phi^{3p} u^{n-2p+2} |\nabla u|^{3p-4} |D^2u|^2 \dd x
        \\
        &
        + \int_{\partial\Omega}\phi^{3p}  u^{n-2p+2}|\nabla u|^{3p-4} \mathrm{II}(\nabla u,\nabla u) \dd \mathcal{H}^{d-1} 
        \leq\ 
        C_4\left[ \int_{\Omega} \phi^{3p} u^n|\nabla \Delta u|^p \dd x + \int_{\Omega} |\nabla \phi|^{3p} u^{n+p} \dd x\right].
    \end{split}
\end{equation}
\end{proposition}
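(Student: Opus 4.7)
The plan is to deduce Proposition~\ref{prop:Bernis_smaller-2p-2} from the already established Proposition~\ref{prop:Bernis_2p-2-2p-1} via a test-function transformation $\tilde\phi := \phi\, u^{\beta}$, in analogy with the approach used for the Newtonian case in \cite{gruen2001}. Since $u \in C^{\infty}(\bar\Omega)$ is positive on the compact set $\bar\Omega$ (so $u$ is bounded below away from zero) and $\phi \in W^1_\infty(\Omega)$ is non-negative, the function $\tilde\phi$ is non-negative and lies in $W^1_\infty(\Omega)$, so Proposition~\ref{prop:Bernis_2p-2-2p-1} applies to $\tilde\phi$ with any reference parameter $n_1 \in (2p-2,\,2p-1)$. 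Setting $\beta := (n-n_1)/(3p)$ produces the compatibility $\tilde\phi^{3p}\,u^{n_1 + \ast} = \phi^{3p}\,u^{n+\ast}$ for every exponent $\ast$ appearing in \eqref{eq:BernisLemma23-1}--\eqref{eq:BernisLemma23-2}, so the left-hand sides and the dissipation terms on the right-hand sides of \eqref{eq:BernisLemma23-1}--\eqref{eq:BernisLemma23-2} already take the form required by \eqref{eq:BernisLemma2.4_I}--\eqref{eq:BernisLemma2.4_II}. The $|\Delta u|^2$ integral on the left-hand side of \eqref{eq:BernisLemma23-2} is non-negative, so dropping it only weakens the estimate and matches the statement of \eqref{eq:BernisLemma2.4_II}.

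The only term still requiring work is $\int |\nabla\tilde\phi|^{3p}\,u^{n_1+p}\,\dd x$. The product rule yields $\nabla\tilde\phi = u^{\beta}\nabla\phi + \beta\phi\,u^{\beta-1}\nabla u$, and the elementary bound $(a+b)^{3p} \leq 2^{3p-1}(a^{3p}+b^{3p})$ splits this integral into a \emph{clean} piece of the form $C\int |\nabla\phi|^{3p}\,u^{n+p}\,\dd x$ and a \emph{critical} piece of the form $C|\beta|^{3p}\int \phi^{3p}\,u^{n-2p}|\nabla u|^{3p}\,\dd x$, which coincides with the left-hand side of \eqref{eq:BernisLemma2.4_I} up to the multiplicative factor $|\beta|^{3p}$. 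If the product $|\beta|^{3p}\cdot C_1(n_1,p)$ (with $C_1$ the constant from Proposition~\ref{prop:Bernis_2p-2-2p-1}) is smaller than a fixed absolute threshold, the critical piece is absorbed into the left-hand side and \eqref{eq:BernisLemma2.4_I} follows; plugging \eqref{eq:BernisLemma2.4_I} back into the transformed \eqref{eq:BernisLemma23-2} then gives \eqref{eq:BernisLemma2.4_II}.

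The main obstacle is to carry out the absorption over the full range of $n$ stated in the proposition. Since $C_1(n_1,p)$ diverges at both endpoints of $(2p-2,\,2p-1)$, naive absorption would cover only a $d$-independent strip below $2p-2$. Reaching the sharp threshold $n_{\min} = 2p - 2(2+R)/(-3p+6+R)$ with $R = \sqrt{(3p-2)^2 + d(3p-4)}$ requires redistributing the critical piece through the Sobolev chain $W^3_p(\Omega) \hookrightarrow W^2_q(\Omega) \hookrightarrow C(\bar\Omega)$, which is available since $d < 3p$ (cf.\ Remark~\ref{rem:Bernis}(iv)) and is exactly the step that introduces the dimension $d$ into the estimate, together with a careful Young step on the intermediate exponents. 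The absorption condition then becomes a quadratic inequality in $n$ whose discriminant is $R$; the two endpoints in the statement are precisely its real roots, and the upper bound $2p - 2(6p-6+R)/(9p-10+R)$ marks the threshold beyond which the quadratic loses a real solution. Executing this optimization while tracking all Young weights inherited from the proof of Proposition~\ref{prop:Bernis_2p-2-2p-1} is where I expect the bulk of the technical work to lie.
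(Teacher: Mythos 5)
Your test-function substitution $\tilde\phi = \phi\,u^\beta$ is a genuinely different idea from what the paper does, and the bookkeeping you carry out (verifying that $\beta = (n-n_1)/(3p)$ makes all the weighted integrands in Proposition~\ref{prop:Bernis_2p-2-2p-1} match those in Proposition~\ref{prop:Bernis_smaller-2p-2}, and splitting $|\nabla\tilde\phi|^{3p}$ into a clean piece and a critical piece) is correct as far as it goes. The paper instead proves Proposition~\ref{prop:Bernis_smaller-2p-2} directly: it abbreviates the relevant integrals as $\mathcal{K}_1,\dots,\mathcal{K}_4$ and $\mathcal{B}$, rewrites the identity of Lemma~\ref{lem:BernisLemma21} as $(3p-4)\mathcal{K}_3+\mathcal{K}_4+\mathcal{B}=\text{RHS}$, bounds $\sqrt{\mathcal{K}_1}$ in terms of $\sqrt{\mathcal{K}_2},\sqrt{\mathcal{K}_3},\sqrt{\mathcal{K}_4}$ using the pointwise estimate $|\Delta u|^2 \leq d\,|D^2 u|^2$, and then optimizes a Young weight $\eps_2$ by solving $(3p-2)+\tfrac{\sqrt d}{2\eps_2}=(3p-4)\tfrac{\eps_2\sqrt d}{2}$, whose positive root contains the term $R := \sqrt{(3p-2)^2+d(3p-4)}$. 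The admissible range for $n$ then falls out of the linear absorption condition $\tfrac{|n-2p+2|}{|n-2p+1|}\,\eps_2\sqrt d<2$.

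There is a genuine gap in your plan, and it lies in the final step. You correctly observe that naive absorption produces only a $d$-independent strip below $2p-2$, but your proposed remedy --- routing the critical term through the Sobolev chain $W^3_p\hookrightarrow W^2_q\hookrightarrow C(\bar\Omega)$ together with an extra Young step --- misattributes where the dimension enters. In this paper those embeddings appear \emph{only} in the approximation argument (proving Theorem~\ref{thm:Bernis} for $u\in W^2_p$ from the smooth case), and they play no role in deriving the range for $n$. The $d$-dependence in the threshold comes from the elementary linear-algebra inequality $\tfrac1d|\Delta u|^2\leq |D^2 u|^2$ inserted when estimating $\mathcal{K}_1$, and there is no counterpart of this ingredient anywhere in your argument. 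Moreover, since Proposition~\ref{prop:Bernis_2p-2-2p-1} holds for every $d\geq1$ with constants $C_1,C_2$ depending only on $n_1,p$, a proof that only \emph{invokes} Proposition~\ref{prop:Bernis_2p-2-2p-1} (with a shifted test function) and then absorbs cannot, on its own, produce a threshold for $n$ that depends on $d$. Finally, your framing of the endpoints as roots of ``a quadratic inequality in $n$ whose discriminant is $R$'' is not what happens: $R$ arises from a quadratic equation in the Young weight $\eps_2$, while the condition that determines the $n$-range, $\tfrac{|n-2p+2|}{|n-2p+1|}\cdot\tfrac{3p-2+R}{3p-4}<2$, is fractional-linear in $n$, not quadratic. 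To repair your approach you would need to reproduce the $|\Delta u|\leq\sqrt d\,|D^2u|$ step inside your own absorption scheme --- at which point you are essentially redoing the paper's computation, and the substitution $\tilde\phi=\phi u^\beta$ no longer buys you anything.
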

\smallskip

\begin{remark}
\begin{itemize}
    \item[(i)] Observe that Proposition \ref{prop:Bernis_smaller-2p-2} is valid  for any dimension $d \geq 1$. Moreover, as  
    \begin{equation*}
        2p - 2 \frac{2 + \sqrt{(3p-2)^2 + d(3p-4)}}{-3p+6+\sqrt{(3p-2)^2 + d(3p-4)}} < 2p-2
    \end{equation*}
    for all \(p\geq 2\), Proposition \ref{prop:Bernis_smaller-2p-2} extends the statement of Proposition \ref{prop:Bernis_2p-2-2p-1} to values of $n$ that are slightly smaller than $2p-2$. However, in Proposition \ref{prop:Bernis_smaller-2p-2} the constants $C_3, C_4 > 0$ depend not only on $n$ and $p$, but also on the dimension $d$.
    \item[(ii)] Note that for $p=2$, the upper bound 
    \begin{equation*}
        2p - 2 \frac{6p-6 + \sqrt{(3p-2)^2 + d(3p-4)}}{9p-10+\sqrt{(3p-2)^2 + d(3p-4)}}
    \end{equation*}
    equals $2 + \frac{2}{4+\sqrt{\frac{8+d}{8}}}$, i.e. it coincides with the one in \cite{gruen2001}.
\end{itemize}
\end{remark}
\smallskip

\begin{proof}
We start by introducing the notation 
\begin{equation*}
    \begin{split}
         \mathcal{K}_1 & \coloneqq \int_{\Omega} \phi^{3p}  u^{n-2p}|\nabla u|^{3p} \dd x,\\
        \mathcal{K}_2 & \coloneqq \int_{\Omega} \phi^{3p-2}|\nabla \phi|^2 u^{n-2p+2} |\nabla u|^{3p-2} \dd x,\\
        \mathcal{K}_3 & \coloneqq \int_{\Omega} \phi^{3p} u^{n-2p+2} |\nabla u|^{3p-6} |D^2u \nabla u|^2 \dd x, \\
        \mathcal{K}_4 & \coloneqq \int_{\Omega} \phi^{3p} u^{n-2p+2} |\nabla u|^{3p-4} |D^2 u|^2 \dd x, \\
         \mathcal{B} & \coloneqq \int_{\partial \Omega} \phi^{3p} u^{n-2p+2} |\nabla u|^{3p-4}\mathrm{II}(\nabla u, \nabla u) \dd \mathcal{H}^{d-1}.
    \end{split}
\end{equation*}
Using this, we can rewrite \eqref{eq:BernisLemma21} from Lemma \ref{lem:BernisLemma21} as 
\begin{equation}\label{eq:KB1}
    \begin{split}
        (3p-4) \mathcal{K}_3 + \mathcal{K}_4 + \mathcal{B}
        = & -3p \int_{\Omega} \phi^{3p-1} u^{n-2p+2}|\nabla u|^{3p-4} \nabla\phi \cdot D^2u \nabla u \dd x 
        \\
        & - (n-2p+2) \int_{\Omega} \phi^{3p} u^{n-2p+1} |\nabla u|^{3p-4} \nabla u \cdot D^2u\nabla u \dd x 
        \\ &
        -\int_{\Omega} \phi^{3p} u^{n-2p+2} |\nabla u|^{3p-4} \nabla u \cdot \nabla \Delta u \dd x .
    \end{split}
\end{equation}    
We want to estimate the right-hand side of \eqref{eq:KB1} in terms of $\mathcal{K}_3$ and  $\mathcal{K}_4$. For that purpose, we apply the Cauchy--Schwarz and  Hölder inequalities, obtaining that 
\begin{equation}\label{eq:KB2}
    \begin{split}
        (3p-4) \mathcal{K}_3 + \mathcal{K}_4 + \mathcal{B}
        \leq &\ 3p \sqrt{\mathcal{K}_2}\sqrt{\mathcal{K}_3} 
        + |n-2p+2| \sqrt{\mathcal{K}_1}\sqrt{\mathcal{K}_3}   
        \\& 
        + \mathcal{K}_1^{\frac{p-1}{p}} \left(\int_{\Omega} \phi^{3p} u^n |\nabla \Delta u|^p\dd x\right)^{\frac{1}{p}}.
    \end{split}
\end{equation}   
{\bf Step 1. Estimates for $\mathcal{K}_1$ and $\mathcal{K}_2$ in terms of $\mathcal{K}_3$ and $\mathcal{K}_4$.} 
First, we observe that, by Hölder's inequality, $\mathcal{K}_2$ can be estimated in terms of $\mathcal{K}_1$ by
\begin{equation}\label{eq:K2}
    \mathcal{K}_2 \leq \mathcal{K}_1^{\frac{3p-2}{3p}} \left(\int_{\Omega}|\nabla \phi|^{3p} u^{n+p} \dd x\right)^{\frac{2}{3p}}.
\end{equation} 
For the integral  $\mathcal{K}_1$, we obtain from equation  \eqref{eq:proof-BernisProp23-1.}, by using the Cauchy--Schwarz inequality and the estimate
    \begin{equation*}
        \frac{1}{d} |\Delta u|^2 \leq \sum_{i=1}^d (\partial_{ii} u)^2 \leq \sum_{i,j=1}^d (\partial_{ij} u)^2 =|D^2u|^2 \quad \text{ in } \Omega,
    \end{equation*}
that  
    \begin{equation*}
    \begin{split}
       \mathcal{K}_1
        & = - \frac{3p}{n-2p+1} \int_{\Omega}\phi^{3p-1} u^{n-2p+1}|\nabla u|^{3p-2} \nabla \phi \cdot \nabla u \dd x 
        \\
        & \quad - \frac{3p-2}{n-2p+1} \int_{\Omega} \phi^{3p} u^{n-2p+1} |\nabla u|^{3p-4}  D^2 u\nabla u \cdot \nabla u \dd x \\
        & \quad - \frac{1}{n-2p+1}\int_{\Omega} \phi^{3p} u^{n-2p+1} |\nabla u|^{3p-2} \Delta u \dd x \\
        & \leq \frac{3p}{|n-2p+1|}  \sqrt{\mathcal{K}_1}\sqrt{\mathcal{K}_2}   + \frac{3p-2}{|n-2p+1|}  \sqrt{\mathcal{K}_1}\sqrt{\mathcal{K}_3}  
        + \frac{\sqrt{d}}{|n-2p+1|} \sqrt{\mathcal{K}_1}\sqrt{\mathcal{K}_4}, 
    \end{split}
    \end{equation*}    
and dividing both sides by  $\sqrt{\mathcal{K}_1}$  gives 
\begin{equation}\label{eq:KB3}
\sqrt{\mathcal{K}_1} \leq  \frac{1}{|n-2p+1|}  \left( 3p \sqrt{\mathcal{K}_2} + (3p-2) \sqrt{\mathcal{K}_3}  + \sqrt{d} \sqrt{\mathcal{K}_4} \right).
\end{equation}
Hence, by using Young's inequality, we see that 
\begin{equation}\label{eq:KB4}
    \begin{split}
        \mathcal{K}_1 & \leq \frac{1}{|n-2p+1|^2} \left(3p \sqrt{\mathcal{K}_2} + (3p-2) \sqrt{\mathcal{K}_3} + \sqrt{d}\sqrt{\mathcal{K}_4} \right)^2 \\
        & \leq \frac{18p^2}{|n-2p+1|^2} \mathcal{K}_2 +\frac{2}{|n-2p+1|^2}\left((3p-2) \sqrt{\mathcal{K}_3} + \sqrt{d}\sqrt{\mathcal{K}_4} \right)^2 .
    \end{split}
\end{equation}
Now, inserting the inequality \eqref{eq:K2} for $\mathcal{K}_2$ into \eqref{eq:KB4} and applying Young's inequality with weight $\tfrac{1}{2}$, we deduce the estimate
\begin{equation*}
    \begin{split}
       \mathcal{K}_1  
        & \leq 
        \frac{1}{2} \mathcal{K}_1 
        + 
        \frac{2}{3p}\left(\frac{3p}{6p-4}\right)^\frac{2-3p}{2} \left(\frac{18 p^2}{|n-2p+1|^2}\right)^{\frac{3p}{2}}
        \int_{\Omega} |\nabla\phi|^{3p} u^{n+p} \dd x \\
        & \quad +  \frac{2}{|n-2p+1|^2}\left((3p-2) \sqrt{ \mathcal{K}_3} + \sqrt{d}\sqrt{ \mathcal{K}_4} \right)^2.
    \end{split}
\end{equation*}
This directly implies the following estimate for $ \mathcal{K}_1 $:
\begin{equation}\label{eq:KB5}
    \mathcal{K}_1 \leq \frac{4}{|n-2p+1|^2}\left((3p-2) \sqrt{ \mathcal{K}_3} + \sqrt{d}\sqrt{ \mathcal{K}_4} \right)^2
    + C
    \int_{\Omega} |\nabla\phi|^{3p} u^{n+p} \dd x,
\end{equation}
where the constant $C$ is given by
\begin{equation*}
    C=C(n,p)
    \coloneqq
    \frac{4}{3p}\left(\frac{3p}{6p-4}\right)^\frac{2-3p}{2} \left(\frac{18 p^2}{|n-2p+1|^2}\right)^{\frac{3p}{2}}>0.
\end{equation*}
{\bf Step 2. Further estimate inequality \eqref{eq:KB2}.} 
Applying in \eqref{eq:KB2} the estimate \eqref{eq:KB3} for $\sqrt{\mathcal{K}_1}$ on the second term and Young's inequality with weight $ \tilde{\eps}_1$ on the third term, we find that
\begin{equation*}
    \begin{split}
        (3p-4) \mathcal{K}_3 + \mathcal{K}_4 + \mathcal{B}
        &\leq  3p \sqrt{\mathcal{K}_2}\sqrt{\mathcal{K}_3} 
        + \frac{|n-2p+2|}{|n-2p+1|} \left( 3p \sqrt{\mathcal{K}_2}\sqrt{\mathcal{K}_3} + (3p-2)  \mathcal{K}_3 + \sqrt{d} \sqrt{\mathcal{K}_3}\sqrt{\mathcal{K}_4}\right)
        \\  & \quad
        + \tilde{\eps}_1\mathcal{K}_1 + C_{ \tilde{\eps}_1}  \int_{\Omega} \phi^{3p} u^n |\nabla \Delta u|^p\dd x
        \\ &
        = 3p \left( 1+ \frac{|n-2p+2|}{|n-2p+1|}\right)  \sqrt{\mathcal{K}_2}\sqrt{\mathcal{K}_3}  + \frac{|n-2p+2|}{|n-2p+1|} \left( (3p-2)  \mathcal{K}_3 + \sqrt{d} \sqrt{\mathcal{K}_3}\sqrt{\mathcal{K}_4}\right)
         \\  & \quad
        + \tilde{\eps}_1\mathcal{K}_1 + C_{ \tilde{\eps}_1}  \int_{\Omega} \phi^{3p} u^n |\nabla \Delta u|^p\dd x
    \end{split}
\end{equation*}   
with 
\begin{equation*}
    \tilde{\eps}_1\coloneqq \eps_1 \frac{(n-2p+1)^2}{4} >0 \quad \text{and} \quad C_{ \tilde{\eps}_1} \coloneqq \frac{1}{p} \left( \frac{p}{p-1}  \tilde{\eps}_1\right)^{1-p} > 0
\end{equation*}
for some arbitrary constant $\eps_1>0$. By Young's inequality with weight $\frac{\eps_2}{2}$ for the second term on the right-hand side of the above inequality and \eqref{eq:KB5}, we further obtain
\begin{equation}\label{eq:KB6}
    \begin{split}
        (3p-4) \mathcal{K}_3 + \mathcal{K}_4 + \mathcal{B}
        &\leq 
         3p \left( 1+ \frac{|n-2p+2|}{|n-2p+1|}\right)  \sqrt{\mathcal{K}_2}\sqrt{\mathcal{K}_3}  
         \\  & \quad
         + \frac{|n-2p+2|}{|n-2p+1|} \left[ \left( (3p-2)  + \frac{\sqrt{d}}{2\eps_2}\right)\mathcal{K}_3 +\frac{\eps_2 \sqrt{d}}{2} \mathcal{K}_4\right]
         \\  & \quad
        + \eps_1 \left( (3p-2) \sqrt{\mathcal{K}_3} +\sqrt{d} \sqrt{\mathcal{K}_4}\right)^2 
        \\  & \quad
        + \tilde{\eps}_1  C \int_{\Omega} |\nabla\phi|^{3p} u^{n+p} \dd x
        \\  & \quad
        + C_{ \tilde{\eps}_1}  \int_{\Omega} \phi^{3p} u^n |\nabla \Delta u|^p\dd x
    \end{split}
\end{equation}   
for some arbitrary $\eps_2>0$.\\
Now we estimate the first term on the right-hand side of \eqref{eq:KB6}. Recalling the estimate \eqref{eq:K2} for $\mathcal{K}_2$ and applying twice Young's inequality with weights $\frac{\eps_3}{2}$ and $\eps_4$, we have
\begin{equation}\label{eq:KB7}
      \begin{split}
        &3p \left( 1+ \frac{|n-2p+2|}{|n-2p+1|}\right)  \sqrt{\mathcal{K}_2}\sqrt{\mathcal{K}_3}  \\
        & \leq 3p \left( 1+ \frac{|n-2p+2|}{|n-2p+1|}\right) \mathcal{K}_1^{\frac{3p-2}{6p}}\left( \int_{\Omega} |\nabla\phi|^{3p} u^{n+p} \dd x\right)^{\frac{1}{3p}} \sqrt{\mathcal{K}_3}\\
        & \leq \frac{\eps_3}{2} \mathcal{K}_3 + \frac{9p^2}{2\eps_3}
        \left( 1+ \frac{|n-2p+2|}{|n-2p+1|}\right)^2 \mathcal{K}_1^{\frac{3p-2}{3p}}\left( \int_{\Omega} |\nabla\phi|^{3p} u^{n+p} \dd x\right)^{\frac{2}{3p}}\\
        & \leq \frac{\eps_3}{2} \mathcal{K}_3 + \eps_4   \mathcal{K}_1 + \left(\frac{3p}{3p-2} \eps_4\right)^{\frac{2-3p}{2}} \frac{2}{3p} \left[ \frac{9p^2}{2\eps_3}
        \left( 1+ \frac{|n-2p+2|}{|n-2p+1|}\right)^2 \right]^{\frac{3p}{2}}\int_{\Omega} |\nabla\phi|^{3p} u^{n+p} \dd x
      \end{split}
\end{equation}
with some $\eps_3,\eps_4>0$.
Inserting \eqref{eq:KB7} and \eqref{eq:KB5} back into \eqref{eq:KB6} and rearranging, we have 
\begin{equation}\label{eq:KB8}
    \begin{split}
        &(3p-4) \mathcal{K}_3 + \mathcal{K}_4 + \mathcal{B}\\
        &\leq 
        \frac{\eps_3}{2} \mathcal{K}_3 + \left( \frac{4\eps_4}{|n-2p+1|^2} + \eps_1\right) \left((3p-2) \sqrt{ \mathcal{K}_3} + \sqrt{d}\sqrt{ \mathcal{K}_4} \right)^2\\
        & \quad 
         + \frac{|n-2p+2|}{|n-2p+1|} \left[ \left( (3p-2)  + \frac{\sqrt{d}}{2\eps_2}\right)\mathcal{K}_3 +\frac{\eps_2 \sqrt{d}}{2} \mathcal{K}_4\right]
         \\ 
        & \quad+ \left[ (\eps_4+\tilde{\eps}_1)  C + \left(\frac{3p}{3p-2} \eps_4\right)^{\frac{2-3p}{2}} \frac{2}{3p} \left[ \frac{9p^2}{2\eps_3}
        \left( 1+ \frac{|n-2p+2|}{|n-2p+1|}\right)^2 \right]^{\frac{3p}{2}}\right] \int_{\Omega} |\nabla\phi|^{3p} u^{n+p} \dd x\\  
        & \quad
        + C_{ \tilde{\eps}_1}  \int_{\Omega} \phi^{3p} u^n |\nabla \Delta u|^p\dd x.
    \end{split}
\end{equation}   
{\bf Step 3. Finish the proof.} 
We now want to absorb the terms on the right-hand side of \eqref{eq:KB8} into its left-hand side. Therefore, we choose \(\eps_2\) as the unique positive solution of
    \begin{equation*}
        (3p-2) + \frac{\sqrt{d}}{ 2\eps_2} = (3p-4) \frac{\eps_2\sqrt{d}}{2},
    \end{equation*}
since this absorbs the second line on the right-hand side of \eqref{eq:KB8} in an optimal way and also leads us to the desired bound for $n$. 
Solving this equation for \(\eps_2>0\) gives
\begin{equation*}
    \eps_2 
    = \frac{3p-2 + \sqrt{(3p-2)^2 + d(3p-4)}}{\sqrt{d}(3p-4)}.
\end{equation*}
If
\begin{equation}\label{eq:KB9}
    \frac{|n-2p+2|}{|n-2p+1|}  \eps_2 \sqrt{d} < 2,
\end{equation}
then we may absorb the second line on the right-hand side into the left-hand side. Inserting the equation for \(\eps_2\) into \eqref{eq:KB9} gives us a bound for \(n\) in terms of \(p\) and $d$. Indeed, the inequality
\begin{equation*}
    \frac{|n-2p+2|}{|n-2p+1|} \frac{3p-2 + \sqrt{(3p-2)^2 + d(3p-4)}}{3p-4} 
    < 2
\end{equation*}
is satisfied for
\begin{equation*}
    2p - 2\frac{2 + \sqrt{(3p-2)^2 + d(3p-4)}}{-3p+6 + \sqrt{(3p-2)^2 + d(3p-4)}} < n < 2p - 2 \frac{6p-6 + \sqrt{(3p-2)^2 + d(3p-4)}}{9p-10+\sqrt{(3p-2)^2 + d(3p-4)}}.
\end{equation*} 
Next, using Young's inequality in the second term on the right-hand side of \eqref{eq:KB8} and applying \eqref{eq:KB9}, we find that \eqref{eq:KB8} can be further estimated as follows: 
\begin{equation}\label{eq:KB10}
    \begin{split}
        &(3p-4) \mathcal{K}_3 + \mathcal{K}_4 + \mathcal{B}\\
        &\leq 
        2 \left[\frac{\eps_3}{2}  + \left( \frac{4\eps_4}{|n-2p+1|^2} + \eps_1\right) (3p-2) \left((3p-2) + \sqrt{d}\right) \right]\mathcal{K}_3\\
        & \quad + 2 \left( \frac{4\eps_4}{|n-2p+1|^2} + \eps_1\right)\sqrt{d}
        \left((3p-2) + \sqrt{d}\right) \mathcal{K}_4
         \\ 
        & \quad+ 2 \left[ (\eps_4+\tilde{\eps}_1)  C + \left(\frac{3p}{3p-2} \eps_4\right)^{\frac{2-3p}{2}} \frac{2}{3p} \left[ \frac{9p^2}{2\eps_3}
        \left( 1+ \frac{|n-2p+2|}{|n-2p+1|}\right)^2 \right]^{\frac{3p}{2}}\right] \int_{\Omega} |\nabla\phi|^{3p} u^{n+p} \dd x\\  
        & \quad
        + 2C_{ \tilde{\eps}_1}  \int_{\Omega} \phi^{3p} u^n |\nabla \Delta u|^p\dd x.
    \end{split}
\end{equation} 
Now we can choose $\eps_1,\eps_3$ and $\eps_4$ such that we can absorb the first two lines on the right-hand side of \eqref{eq:KB10} into its left-hand side.  Hence, we find a constant \(\tilde{C} = \tilde{C}(n,p,d)>0\) such that
    \begin{equation*}
       \mathcal{K}_3 + \mathcal{K}_4 + \mathcal{B} \leq \tilde{C}\left[ \int_{\Omega} |\nabla \phi |^{3p} u^{n+p} \dd x + \int_{\Omega}  \phi^{3p} u^n |\nabla \Delta u|^p \dd x\right].
    \end{equation*}
Combining this estimate with \eqref{eq:KB5} concludes the proof. 
\end{proof}

The final Bernis-type inequality, presented in the following proposition, provides an estimate for an integral involving the product of the function $u$ and its Laplacian. The proof does not require a case-by-case analysis based on the mobility exponent $n$ but can be directly established for the entire range of  $n$. The key ingredients of the proof are the inequalities \eqref{eq:BernisLemma23-1} and \eqref{eq:BernisLemma2.4_I}, which control the integral $\int_\Omega \varphi^{3p} u^{n-2p} |\nabla u|^{3p} \dd x$. Additionally, we note that the corresponding result for the one-dimensional case is presented in \cite{ansini2004}.

\begin{proposition}\label{prop:Bernis_new_III}
Let $\Omega\subset \R^d$ be a bounded, convex and smooth domain, and let $2 \leq p < \frac{19}{3}$ be fixed. Assume that $u\in C^{\infty}(\bar{\Omega})$ is positive and satisfies $\nabla u \cdot \nu = 0$ on \(\partial \Omega\), and let \(\phi \in W^{1}_\infty(\Omega)\) be non-negative. If
\begin{equation*}
    2p - 2\frac{2 + \sqrt{(3p-2)^2 + d(3p-4)}}{-3p+6 + \sqrt{(3p-2)^2 + d(3p-4)}} < n < 2p - 1,
\end{equation*}  
then there exists a positive constant \(C_5=C_5(n,p,d) >0\)  
such that the following estimate holds:
\begin{equation}\label{eq:Bernis_new_IV}
\begin{split}
    \int_{\Omega} \phi^{3p}u^{n-\frac{p}{2}} |\Delta u|^{\frac{3p}{2}} \dd x 
    \leq\ 
    C_5\left( \int_{\Omega} \phi^{3p} u^n|\nabla \Delta u|^p \dd x + \int_{\Omega} |\nabla \phi|^{3p} u^{n+p} \dd x\right).
\end{split}
\end{equation}
\end{proposition}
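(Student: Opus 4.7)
The plan is to perform a single integration by parts on
\begin{equation*}
I := \int_\Omega \phi^{3p} u^{n-p/2}|\Delta u|^{3p/2}\dd x
\end{equation*}
so as to drop one power of $|\Delta u|$, and then to estimate the resulting integrals via Hölder's inequality, choosing the exponents so that every factor on the right-hand side is a power of either $I$ (with exponent strictly less than one), the dissipation $D := \int_\Omega \phi^{3p}u^n|\nabla\Delta u|^p\dd x$, the weighted gradient integral $M := \int_\Omega \phi^{3p} u^{n-2p}|\nabla u|^{3p}\dd x$, or the cut-off term $R := \int_\Omega |\nabla\phi|^{3p} u^{n+p}\dd x$. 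The already-proved estimates \eqref{eq:BernisLemma23-1} and \eqref{eq:BernisLemma2.4_I} will then bound $M$ by $D+R$, and Young's inequality will absorb the $I$-factors into the left-hand side.

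Concretely, I rewrite $|\Delta u|^{3p/2} = |\Delta u|^{3p/2-2}\Delta u \cdot \diverg(\nabla u)$ (which is admissible since $p\ge 2$ gives $3p/2\ge 3$) and integrate by parts. The boundary integral vanishes because $\nabla u\cdot\nu =0$ on $\partial\Omega$, and the chain-rule identity $\nabla(|\Delta u|^{3p/2-2}\Delta u) = (\tfrac{3p}{2}-1)|\Delta u|^{3p/2-2}\nabla\Delta u$ yields
\begin{equation*}
I = -3p\,\mathcal{T}_1 \;-\; (n-\tfrac{p}{2})\,\mathcal{T}_2 \;-\; (\tfrac{3p}{2}-1)\,\mathcal{T}_3,
\end{equation*}
where
\begin{align*}
\mathcal{T}_1 &:= \int_\Omega \phi^{3p-1} u^{n-p/2}|\Delta u|^{3p/2-2}\Delta u \;\nabla\phi\cdot \nabla u \dd x,\\
\mathcal{T}_2 &:= \int_\Omega \phi^{3p} u^{n-p/2-1}|\Delta u|^{3p/2-2}\Delta u \;|\nabla u|^2 \dd x,\\
\mathcal{T}_3 &:= \int_\Omega \phi^{3p} u^{n-p/2}|\Delta u|^{3p/2-2} \;\nabla\Delta u\cdot \nabla u \dd x.
\end{align*}

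The second step is to take absolute values and apply Hölder's inequality to each $\mathcal{T}_i$ with the exponent triples $(\tfrac{3p}{3p-4},\,p,\,3p)$ for $\mathcal{T}_3$, $(\tfrac{3p}{3p-2},\,\tfrac{3p}{2})$ for $\mathcal{T}_2$, and $(\tfrac{3p}{3p-2},\,3p,\,3p)$ for $\mathcal{T}_1$. A direct exponent count (balancing $\phi$, $u$, $|\Delta u|$, $|\nabla u|$ and $|\nabla\Delta u|$) shows that these choices produce the clean bounds
\begin{equation*}
|\mathcal{T}_3| \le I^{(3p-4)/(3p)} D^{1/p} M^{1/(3p)},\qquad |\mathcal{T}_2|\le I^{(3p-2)/(3p)} M^{2/(3p)},\qquad |\mathcal{T}_1|\le I^{(3p-2)/(3p)} R^{1/(3p)} M^{1/(3p)}.
\end{equation*}

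To conclude, I note that the stated range of $n$ is exactly covered by the union of the ranges of Propositions \ref{prop:Bernis_2p-2-2p-1} and \ref{prop:Bernis_smaller-2p-2}: these overlap since $2p - 2\frac{6p-6+\sqrt{(3p-2)^2+d(3p-4)}}{9p-10+\sqrt{(3p-2)^2+d(3p-4)}} \ge 2p-2$ for all $p\ge \tfrac{4}{3}$. Hence $M\le C(n,p,d)(D+R)$. Substituting this into each of the three Hölder estimates and then applying Young's inequality splits each term as $\varepsilon I + C_\varepsilon(D+R)$, because the exponent of $I$ is always strictly less than one. Choosing $\varepsilon$ small enough to absorb the three $\varepsilon I$ contributions on the left yields \eqref{eq:Bernis_new_IV}. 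The main obstacle is the combinatorial bookkeeping of the Hölder exponents in step two: one has to align the powers of five separate factors across three Hölder pieces while simultaneously keeping the $I$-exponent below one. Once the correct triples are identified, the rest of the argument is a routine absorption.
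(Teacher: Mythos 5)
Your proposal is correct and follows essentially the same route as the paper's own proof: integrate by parts once on $\int_\Omega \phi^{3p} u^{n-p/2}|\Delta u|^{3p/2}\,\mathrm{d}x$, apply Hölder with exactly the exponent triples you list to produce the three factors $I^{\alpha}D^{\beta}M^{\gamma}R^{\delta}$, absorb via Young, and then control $M$ by the already-established estimates \eqref{eq:BernisLemma23-1} and \eqref{eq:BernisLemma2.4_I}. The only cosmetic difference is that you spell out explicitly that the $n$-ranges of Propositions \ref{prop:Bernis_2p-2-2p-1} and \ref{prop:Bernis_smaller-2p-2} overlap for $p\ge 2$, a point the paper leaves implicit in its overall structure.
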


\begin{proof}
Using integration by parts, we can rewrite
\begin{equation} \label{eq:Bernis_Laplace_I}
\begin{split}
    \int_\Omega \varphi^{3p} u^{n-\frac{p}{2}} |\Delta u|^\frac{3p}{2} \dd x
    &=
    -3p\int_\Omega \varphi^{3p-1} u^{n-\frac{p}{2}} |\Delta u|^\frac{3p-4}{2} \Delta u \nabla \varphi\cdot \nabla u \dd x
    \\
    &\quad
    - \left(n-\frac{p}{2}\right) 
    \int_\Omega \varphi^{3p} u^{n-\frac{p}{2}-1} |\nabla u|^2 |\Delta u|^\frac{3p-4}{2} \Delta u \dd x
    \\
    &\quad
    - \left(\frac{3p-2}{2}\right) \int_\Omega 
    \varphi^{3p} u^{n-\frac{p}{2}} |\Delta u|^\frac{3p-4}{2} \nabla u\cdot \nabla\Delta u \dd x.
\end{split}
\end{equation}
Applying now H\"older's and Young's inequalities on each of the integrals on the right-hand side yields the estimates 
\begin{equation} \label{eq_Bernis_NEW_I}
\begin{split}
    &\int_\Omega \varphi^{3p-1} u^{n-\frac{p}{2}} |\Delta u|^\frac{3p-4}{2} \Delta u \nabla \varphi\cdot \nabla u \dd x
    \\
    \leq\ &
    \left(
    \int_\Omega
    \varphi^{3p} u^{n-\frac{p}{2}} |\Delta u|^\frac{3p}{2} \dd x
    \right)^\frac{3p-2}{3p}
    \left(
    \int_\Omega
    \varphi^{3p} u^{n-2p} |\nabla u|^{3p} \dd x
    \right)^\frac{1}{3p}
    \left(
    \int_\Omega
    |\nabla \varphi|^{3p} u^{n+p} \dd x
    \right)^\frac{1}{3p}
    \\
    \leq\ &
    \eps_1 \int_\Omega
    \varphi^{3p} u^{n-\frac{p}{2}} |\Delta u|^\frac{3p}{2} \dd x
    +
    C_{\eps_1}
    \int_\Omega
    \varphi^{3p} u^{n-2p} |\nabla u|^{3p} \dd x
    +
    \tilde{C}_{\eps_1}
    \int_\Omega
    |\nabla \varphi|^{3p} u^{n+p} \dd x,
\end{split}
\end{equation}
for the first integral,
\begin{equation}\label{eq_Bernis_NEW_II}
\begin{split}
    \int_\Omega \varphi^{3p} u^{n-\frac{p}{2}-1} |\nabla u|^2 |\Delta u|^\frac{3p-4}{2} \Delta u \dd x
    &\leq 
    \left(
    \int_\Omega
    \varphi^{3p} u^{n-\frac{p}{2}} |\Delta u|^\frac{3p}{2} \dd x
    \right)^\frac{3p-2}{3p}
    \left(
    \int_\Omega
    \varphi^{3p} u^{n-2p} |\nabla u|^{3p} \dd x
    \right)^\frac{2}{3p}
    \\
    &\leq
    \eps_2 \int_\Omega
    \varphi^{3p} u^{n-\frac{p}{2}} |\Delta u|^\frac{3p}{2} \dd x
    +
    C_{\eps_2}
    \int_\Omega
    \varphi^{3p} u^{n-2p} |\nabla u|^{3p} \dd x
\end{split}
\end{equation}
for the second integral, and
\begin{equation} \label{eq_Bernis_NEW_III}
\begin{split}
    & \int_\Omega 
    \varphi^{3p} u^{n-\frac{p}{2}} |\Delta u|^\frac{3p-4}{2} \nabla u\cdot \nabla\Delta u \dd x
    \\
    \leq\ 
    &
    \left(
    \int_\Omega
    \varphi^{3p} u^{n-\frac{p}{2}} |\Delta u|^\frac{3p}{2} \dd x
    \right)^\frac{3p-4}{3p}
    \left(
    \int_\Omega
    \varphi^{3p} u^{n-2p} |\nabla u|^{3p} \dd x
    \right)^\frac{1}{3p}
    \left(
    \int_\Omega \varphi^{3p} u^n |\nabla \Delta u|^p \dd x
    \right)^\frac{1}{p}
    \\
    \leq\ &
    \eps_3 \int_\Omega
    \varphi^{3p} u^{n-\frac{p}{2}} |\Delta u|^\frac{3p}{2} \dd x
    +
    C_{\eps_3} \int_\Omega
    \varphi^{3p} u^{n-2p} |\nabla u|^{3p} \dd x
    +
    \tilde{C}_{\eps_3}
    \int_\Omega \varphi^{3p} u^n |\nabla \Delta u|^p \dd x
\end{split}
\end{equation}
for the third one, where we chose $0 < \eps_1+\eps_2+\eps_3 < 1$. By the inequalities \eqref{eq:BernisLemma23-1}, respectively \eqref{eq:BernisLemma2.4_I} we can estimate the integral $\int_\Omega \varphi^{3p} u^{n-2p} |\nabla u|^{3p} \dd x$ in \eqref{eq_Bernis_NEW_I}--\eqref{eq_Bernis_NEW_III} in terms of $\int_\Omega \varphi^{3p} u^n |\nabla\Delta u|^p \dd x$ and $\int_\Omega |\nabla\varphi|^{3p} u^{n+p} \dd x$. This yields the desired inequality.
\end{proof}

To complete the proof of Theorem \ref{thm:Bernis}, it remains to perform the approximation argument to establish the result in the desired functional setting, i.e., for functions $u \in W^2_p(\Omega)$ with $2\leq p < \frac{19}{3}$. Note that all results in this section are valid for arbitrary dimensions $d$. The restriction $d < 3p$ arises only during the approximation argument, where Sobolev embeddings are employed.

\begin{proof}[\textbf{Proof of Theorem \ref{thm:Bernis}}] 
The proof follows the idea of the proof of \cite[Theorem 1.1]{gruen2001}, using  elliptic regularity theory in $L_p(\Omega)$ instead of $L_2(\Omega)$.
Let $ u \in W^2_p(\Omega)$, with $2 \leq p < \frac{19}{3}$, be a positive function satisfying \( \nabla u \cdot \nu = 0 \) on \( \partial \Omega \), 
\begin{equation} \label{eq:finite_integrals}
    \int_{\Omega} u^n |\nabla \Delta u|^p\, dx < \infty.
\end{equation}
We may assume that there is $\delta >0$ such that $u \geq  \delta > 0$ a.e. in $\Omega$. Using  \eqref{eq:finite_integrals}, this implies $\int_\Omega |\nabla \Delta u|^p\, dx < \infty$.

To prove the claim, we construct a sequence of functions \((u_k)_{k\in \N}\subset C^{\infty}(\bar{\Omega})\) with the following properties:
\begin{enumerate}
    \item $u_k \geq \tfrac{\delta}{2}$ in $\bar{\Omega}$ for $k \in \N$ sufficiently large;
    \item $\nabla u_k \cdot \nu = 0$ on $\partial \Omega$;
    \item $u_k \longrightarrow u$ strongly in $W^2_p(\Omega)$, as $k\to \infty$;
    \item $\nabla \Delta u_k \longrightarrow \nabla \Delta u$ strongly in $L_p(\Omega)$, as $k\to \infty$.
\end{enumerate}
Once this sequence $u_k$ is constructed, we know it satisfies the inequalities of Propositions \ref{prop:Bernis_2p-2-2p-1}, 
\ref{prop:Bernis_smaller-2p-2} and \ref{prop:Bernis_new_III} for the whole range
\begin{equation*}
    2p - 2\frac{2 + \sqrt{(3p-2)^2 + d(3p-4)}}{-3p+6 + \sqrt{(3p-2)^2 + d(3p-4)}} 
    < n 
    < 
    2p-1.
\end{equation*}
We can then take the limit on both sides of the corresponding inequalities. For the right-hand side, we apply Vitali's convergence theorem, while for the left-hand side, we use Fatou's lemma. This yields the corresponding estimates 
\eqref{eq:Bernis_new_I}, \eqref{eq:Bernis_new_II} and \eqref{eq:Bernis_new_III} for the limit function $u$. \\

We now construct the sequence $u_k$. Since $\nabla \Delta u \in L_p(\Omega)$, we have $\Delta u \in W^1_p(\Omega)$. Thus, there exists a sequence $(f_k)_{k\in \N} \subset C^{\infty}(\bar{\Omega})$ satisfying 
\begin{equation*}
    f_k \longrightarrow -\Delta u \quad \text{strongly in } W^1_p(\Omega) 
    \quad \text{and} \quad \int_{\Omega} f_k \dd x = 0,
\end{equation*}
see \cite{gilbargtrudinger}. Here, the zero-mean condition can be obtained since $\int_{\Omega} \Delta u \dd x = \int_{\partial\Omega} \nabla u\cdot \nu = 0$ by the Neumann boundary condition on $u$. For each $k \in \N$, let $u_k$ be the solution to the Neumann problem
\begin{equation*}
    \begin{cases}
        -\Delta u_k = f_k, \quad x\in \Omega\\
        \nabla u_k \cdot \nu = 0, \quad x\in \partial \Omega \\
        \int_{\Omega} u_k \dd x = \int_{\Omega} u \dd x.
    \end{cases}
\end{equation*}
By elliptic regularity theory \cite[Theorem 2.5.1.1, Remark 2.5.1.2]{grisvard2011}, we find that \(u_k\in C^{\infty}(\bar{\Omega})\) and  
\begin{equation*}
    u_k \longrightarrow u
    \quad \text{strongly in }
    W^3_p(\Omega).
\end{equation*}
In particular, $u_k$ satisfies (2), (3), and (4).  
Moreover, by the Sobolev embedding theorem, we have 
\begin{equation*}
    W^3_p(\Omega) \hookrightarrow W^2_q(\Omega) \quad 
    \text{for } 1 \leq q < \frac{dp}{d-p} 
    \quad \text{and} \quad
    W^2_q(\Omega) \hookrightarrow C(\bar{\Omega}) \quad
    \text{for } q > \frac{d}{2}.
\end{equation*}
Consequently, we find that
\begin{equation*}
    \begin{cases}
    u_k \longrightarrow u
    \quad \text{strongly in }
    W^2_q(\Omega)
    &
    \text{for } 1 \leq q < \frac{dp}{d-p}.       
    \\
    u_k \longrightarrow u
    \quad \text{strongly in }
    C(\bar{\Omega}) &
    \text{for } d < 3p.
    \end{cases}
\end{equation*}
The strong convergence $u_k \to u$ in $C(\bar{\Omega})$ and $u \geq \delta >0$ imply (1) for $k \in \N$ large enough. This concludes the construction of the sequence \( u_k \) satisfying properties (1)--(4), thereby completing the proof of the theorem.
\end{proof}



\section{Proof of Theorem \ref{thm:consequence}}
\label{sec:secconsequence}
We now prove Theorem \ref{thm:consequence} which is a consequence of the inequalities \eqref{eq:Bernis_new_I} and \eqref{eq:Bernis_new_III} of Theorem \ref{thm:Bernis}.

\begin{proof}[\textbf{Proof of Theorem \ref{thm:consequence}}]
The first parts of \eqref{eq:consequence}, respectively \eqref{eq:Bernis-inequality-more}, follow from \eqref{eq:Bernis_new_I}, while parts two and three follow from \eqref{eq:Bernis_new_I} and \eqref{eq:Bernis_new_III}. We mention again that we actually prove weighted versions of the first-order and second-order estimate in \eqref{eq:Bernis-inequality-more}.

\noindent\textbf{Parts one of \eqref{eq:consequence}, \eqref{eq:Bernis-inequality-more}: } We first show that $u^{\frac{n+p}{3p}}$  lies in $W^1_{3p}(\Omega)$ and that the inequality 
\begin{equation}\label{eq:consequence1}
        \int_{\Omega} \phi^{3p} \Big|\nabla \big(u^{\frac{n+p}{3p}}\big)\Big|^{3p} \dd x 
        \leq C \left(\int_{\Omega} \phi^{3p} u^n|\nabla \Delta u|^p \dd x + \int_{\Omega} |\nabla \phi|^{3p} u^{n+p} \dd x\right)
\end{equation}
holds for some constant \(C=C(n,p,d)>0\) and for all non-negative test functions $\phi \in W^1_\infty(\Omega)$ satisfying $\int_\Omega |\nabla \varphi|^{3p} u^{n+p} \dd x < \infty$. Since \(u \in W^2_p(\Omega)\) and 
$W^2_p(\Omega) \hookrightarrow L_{n+p}(\Omega)$,
we have \(u^{\frac{n+p}{3p}} \in L_{3p}(\Omega)\). In addition, we compute that 
\begin{equation}
\label{eq:consequence2}
    \nabla \big(u^{\tfrac{n+p}{3p}}\big) = \tfrac{n+p}{3p} \ u^{\tfrac{n-2p}{3p}} \nabla u,
\end{equation}
and together with  \eqref{eq:Bernis_new_I} from Theorem \ref{thm:Bernis}, we obtain that $u^{\frac{n+p}{3p}} \in W^1_{3p}(\Omega)$ with the inequality
\begin{equation*}
\begin{split}
    \int_{\Omega} \varphi^{3p} \Big|\nabla \big(u^{\frac{n+p}{3p}}\big)\Big|^{3p} \dd x 
    &= 
    \left( \tfrac{n+p}{3p} \right)^{3p} \int_{\Omega} \varphi^{3p} u^{n-2p} |\nabla u|^{3p} \dd x
    \\
    &\leq 
    \left( \tfrac{n+p}{3p} \right)^{3p}  c_1 
    \left(
    \int_{\Omega} \varphi^{3p} u^n|\nabla \Delta u|^p \dd x
    +
    \int_\Omega |\nabla\varphi|^{3p} u^{n+p} \dd x\right)
    < \infty.
\end{split}
\end{equation*}
Setting $\varphi \equiv 1$ yields the first estimate.

\medskip

\noindent\textbf{Parts two of \eqref{eq:consequence}, \eqref{eq:Bernis-inequality-more}: }
Next, we show that  $u^{\frac{2(n+p)}{3p}}$ belongs to $W^2_{\frac{3p}{2}}(\Omega)$ and that there exists a positive constant \(\tilde{C}=\tilde{C}(n,p,d)\) such that 
\begin{equation}\label{eq:consequence3}
        \int_{\Omega} \phi^{3p} \Big|\Delta \big(u^{\frac{2(n+p)}{3p}}\big)\Big|^{\frac{3p}{2}} \dd x 
        \leq \tilde{C} \left(\int_{\Omega} \phi^{3p} u^n|\nabla \Delta u|^p \dd x + \int_{\Omega} |\nabla \phi|^{3p} u^{n+p} \dd x\right)
\end{equation}
holds for all non-negative test functions $\phi \in W^1_\infty(\Omega)$ satisfying $\int_\Omega |\nabla \varphi|^{3p} u^{n+p} \dd x < \infty$.
Since \(u \in W^2_p(\Omega) \hookrightarrow L_{n+p}(\Omega) \), we obtain as before that \(u^{\frac{2(n+p)}{3p}} \in  L_{\frac{3p}{2}}(\Omega)\). For \(\nabla\big( u^{\frac{2(n+p)}{3p}}\big)\), we compute 
\begin{equation}\label{eq:consequence4}
   \nabla\big( u^{\frac{2(n+p)}{3p}}\big) = \tfrac{2(n+p)}{3p} u^{\frac{2n-p}{3p}} \nabla u
\end{equation}
and use H\"older's inequality together with \eqref{eq:Bernis_new_I} (with $\varphi \equiv 1$) to find that 
\begin{equation*}
\begin{split}
    \int_{\Omega} \Big|\nabla \big(u^{\frac{2(n+p)}{3p}}\big)\Big|^{\frac{3p}{2}} \dd x  
    &= \left( \tfrac{2(n+p)}{3p} \right)^{\frac{3p}{2}} \int_{\Omega} u^{\frac{n+p}{2}} u^{\frac{n}{2}-p}|\nabla u|^{\frac{3p}{2}} \dd x 
    \\
    & \leq 
    \left( \tfrac{2(n+p)}{3p} \right)^{\frac{3p}{2}} \, c_1^{\frac{1}{2}} \left(\int_{\Omega} u^{n+p}\dd x\right)^{\frac{1}{2}} \left(\int_{\Omega} u^n|\nabla \Delta u|^p \dd x\right)^{\frac{1}{2}}<\infty.
\end{split}
\end{equation*}
Hence, \(\nabla \big(u^{\frac{2(n+p)}{3p}}\big) \in L_{\frac{3p}{2}}(\Omega;\R^d)\). Now, we compute 
\[
\Delta \big(u^{\frac{2(n+p)}{3p}}\big) = \tfrac{2(n+p)}{3p}\Big( \tfrac{2n-p}{3p} u^{\tfrac{2(n-2p)}{3p}} |\nabla u|^2 + u^{\tfrac{2n-p}{3p}} \Delta u\Big),
\]
and by Jensen's inequality we further have that
\begin{equation}\label{eq:consequence5}
  \big|\Delta \big(u^{\frac{2(n+p)}{3p}}\big)\big|^{\frac{3p}{2}}  \leq c \left( u^{n-2p} |\nabla u|^{3p} + u^{n-\frac{p}{2}}|\Delta u|^{\frac{3p}{2}} \right)
\end{equation}
for some constant \( c=c(n,p)>0\). 
Using \eqref{eq:Bernis_new_I} and  \eqref{eq:Bernis_new_III}, we infer from \eqref{eq:consequence5} that \(\Delta \big(u^{\frac{2(n+p)}{3p}}\big)\) belongs to \( L_{\frac{3p}{2}}(\Omega)\) and that \eqref{eq:consequence3} holds. We deduce the second estimate in \eqref{eq:Bernis-inequality-more} by setting $\phi \equiv 1$ in \eqref{eq:consequence3}. 
It remains to check that \(D^2\big(u^{\frac{2(n+p)}{3p}}\big) \in L_{\frac{3p}{2}}(\Omega;\R^{d\times d}) \). It follows from \eqref{eq:consequence4} and the Neumann boundary condition for \(u\) that \(\nabla\big(u^{\frac{2(n+p)}{3p}}\big)\cdot \nu =0\) on \(\partial \Omega\). Elliptic $L_{p}$-theory (see for instance \cite[Thm. 2.4.2.7 \& 2.5.5.1]{grisvard2011}) then proves the claim.

\medskip

\noindent\textbf{Parts three of \eqref{eq:consequence}, \eqref{eq:Bernis-inequality-more}:}
Finally, we show that \( u^{\frac{n+p}{p}} \) belongs to \( W^3_{p}(\Omega) \) and that there exists a constant \(\hat{C}= \hat{C}(n,p,d)> 0\) such that 
\begin{equation}\label{eq:consequencepart3.1}
        \int_{\Omega} \bigl|\nabla \Delta \big(u^{\frac{n+p}{p}}\big)\bigr|^p \, \mathrm{d}x 
        \leq \hat{C} \int_{\Omega} u^n |\nabla \Delta u|^p \, \dd x.
\end{equation} 
Because \( u \in W^2_p(\Omega) \hookrightarrow L_{n+p}(\Omega) \), we have \( u^{\frac{n+p}{p}} \in L_p(\Omega) \). For the gradient of \( u^{\frac{n+p}{p}} \), we calculate
\begin{equation}\label{eq:consequencepart3.2}
   \nabla\big( u^{\frac{n+p}{p}} \big) = \tfrac{n+p}{p} u^{\frac{n}{p}} \nabla u,
\end{equation}
and using H\"older's inequality along with \eqref{eq:Bernis_new_I} for $\varphi \equiv 1$ we deduce that 
\begin{equation*}
\begin{split}
    \int_{\Omega} \big|\nabla \big(u^{\frac{n+p}{p}}\big)\big|^{p} \, \dd x  
    &= 
    \left(\tfrac{n+p}{p}\right)^p \int_{\Omega} u^{n} |\nabla u|^p \, \dd x \\
    & \leq
    \left(\tfrac{n+p}{p}\right)^p 
    \left(
    \int_\Omega u^{n+p} \dd x
    \right)^\frac{2}{3}
    \left(
    \int_{\Omega} u^{n-2p} |\nabla u|^{3p} \, \dd x
    \right)^\frac{1}{3}
    \\
    & \leq 
    \left(\tfrac{n+p}{p}\right)^p \, c_1^{\frac{1}{3}} \left(\int_{\Omega} u^{n+p} \, \mathrm{d}x \right)^{\frac{2}{3}} \left(\int_{\Omega} u^{n} |\nabla \Delta u|^{p} \, \mathrm{d}x \right)^{\frac{1}{3}} < \infty.
\end{split}
\end{equation*}
Thus, \(\nabla\big(u^{\frac{n+p}{p}}\big) \in L_{p}(\Omega; \mathbb{R}^d)\). For the Laplacian of 
\( u^{\frac{n+p}{p}} \), we compute
\[
   \Delta\big( u^{\frac{n+p}{p}} \big) 
   = 
   \tfrac{n+p}{p} u^{\frac{n}{p}} \Delta u 
   + 
   \tfrac{n(n+p)}{p^2} u^{\frac{n-p}{p}} |\nabla u|^2.
\]
By applying Jensen's and H\"older's inequalities, along with  \eqref{eq:Bernis_new_I} and \eqref{eq:Bernis_new_III}, again for $\varphi \equiv 1$, we obtain
\begin{equation*}
\begin{split}
    \int_{\Omega} \big|\Delta\big( u^{\frac{n+p}{p}} \big)\big|^{p} \, \dd x  
    &\leq c \left( \int_{\Omega} u^{n} |\Delta u|^p \, \mathrm{d}x + \int_{\Omega} u^{n-p} |\nabla u|^{2p} \, \mathrm{d}x \right) \\
    & \leq c \left(\int_{\Omega} u^{n+p} \, \dd x \right)^{\frac{1}{3}}\left[ \left(\int_{\Omega} u^{n-\frac{p}{2}} |\Delta u|^{\frac{3p}{2}} \, \mathrm{d}x \right)^{\frac{2}{3}}  + \left(\int_{\Omega} u^{n-2p} |\nabla u|^{3p} \, \mathrm{d}x \right)^{\frac{2}{3}}\right]\\
    & \leq c\left(c_1^{\frac{2}{3}}+c_3^{\frac{2}{3}}\right) \left(\int_{\Omega} u^{n+p} \, \mathrm{d}x \right)^{\frac{1}{3}}\left(\int_{\Omega} u^{n} |\nabla \Delta u|^{p} \, \mathrm{d}x \right)^{\frac{2}{3}} < \infty
\end{split}
\end{equation*}
for some constant $c=c(n,p)>0$. Hence, 
$\Delta \big( u^{\frac{n+p}{p}} \big) \in L_p(\Omega)$.
To verify that \(D^2\big(u^{\frac{n+p}{p}}\big)\) belongs to  \(L_{p}(\Omega;\R^{d\times d}) \), observe that from \eqref{eq:consequencepart3.2} and the Neumann boundary condition for \(u\), we have \(\nabla\big(u^{\frac{n+p}{p}}\big)\cdot \nu =0\) on \(\partial \Omega\). The claim then follows by applying elliptic $L_{p}$-theory (see, e.g., \cite[Thm. 2.4.2.7]{grisvard2011}).
Next, it follows from 
\begin{equation*}
    \nabla \Delta \big(u^{\frac{(n+p)}{p}}\big) 
    = 
    \tfrac{(n+p)n(n-p)}{p^3} \ u^{\frac{n-2p}{p}}|\nabla u|^2 \nabla u 
    + 
    \tfrac{2(n+p)n}{p^2} \ u^{\frac{n-p}{p}} D^2 u \nabla u + \tfrac{(n+p)n}{p^2} \ u^{\frac{n-p}{p}} \Delta u \nabla u + \tfrac{n+p}{p}\  u^{\frac{n}{p}} \nabla\Delta u 
\end{equation*}
and Jensen's inequality that there is a constant $\tilde{c}=\tilde{c}(n,p)>0$ such that
\begin{equation}\label{eq:consequencepart3.3}
|\nabla \Delta (u^{\frac{n+p}{p}})|^p \leq \tilde{c}\bigl(u^{n-2p} |\nabla u|^{3p}+ u^{n-p}|D^2u \nabla u|^p + u^{n-p}|\Delta u\nabla u|^p + u^n|\nabla \Delta u|^p\bigr).
\end{equation}
The main task is to estimate the second-order term $u^{n-p} |D^2 u\nabla u|^p$. To this end, we compute for $s\in \R\setminus \{0\}$ the following two identities: 
\begin{equation*}
D^2 u \nabla u = \tfrac{1}{s} u^{1-s} D^2 u^s \nabla u - (s-1) u^{-1} |\nabla u|^2 \nabla u \quad \text{and} \quad \Delta u \nabla u = \tfrac{1}{s} u^{1-s}\Delta u^s \nabla u - (s-1) u^{-1} |\nabla u|^2 \nabla u.
\end{equation*}
Multiplying these identities by $u^{\frac{n-p}{p}}$, we rewrite them as
\begin{equation}\label{eq:consequencepart3.4}
u^{\frac{n-p}{p}} D^2 u \nabla u = \tfrac{1}{s} u^{\frac{n}{p}-s} D^2 u^s \nabla u - (s-1)  u^{\frac{n-2p}{p}} |\nabla u|^2 \nabla u 
\end{equation}
and 
\begin{equation}\label{eq:consequencepart3.5}
u^{\frac{n-p}{p}} \Delta u \nabla u = \tfrac{1}{s} u^{\frac{n}{p}-s} \Delta u^s \nabla u - (s-1)  u^{\frac{n-2p}{p}} |\nabla u|^2 \nabla u.
\end{equation}
For the term $u^{\frac{n-2}{p}} |\nabla u|^2 \nabla u$, appearing in \eqref{eq:consequencepart3.4} and \eqref{eq:consequencepart3.5}, a straightforward calculation gives the identity
\begin{equation*}
    u^{\frac{n-2p}{p}} |\nabla u|^2 \nabla u =  \left(\tfrac{3p}{n+p} \right)^3 |\nabla \big( u^{\frac{n+p}{3p}} \big)|^2\  \nabla \big( u^{\frac{n+p}{3p}} \big).
\end{equation*}
Substituting this identity back into \eqref{eq:consequencepart3.4} and \eqref{eq:consequencepart3.5}, and then applying Jensen's inequality, results in 
\begin{equation*}
u^{n-p} |D^2 u \nabla u|^p \leq \hat{c} \left( u^{n-sp} |D^2 u^s \nabla u|^p + |\nabla \big( u^{\frac{n+p}{3p}} \big)|^{3p}  \right)
\end{equation*}
and 
\begin{equation*}
u^{n-p} |\Delta u \nabla u|^p \leq \hat{c} \left( u^{n-sp} |\Delta u^s \nabla u|^p + |\nabla \big( u^{\frac{n+p}{3p}} \big)|^{3p}  \right)
\end{equation*}
with a constant $\hat{c}>0$, depending only on $n$, $p$ and $s$. Setting \( s = \frac{2(n+p)}{3p}\), it follows from \eqref{eq:consequencepart3.3}, by applying the above inequalities and H\"older's inequality, along with \eqref{eq:Bernis_new_I} and the estimates from the first and second part of \eqref{eq:Bernis-inequality-more} with $\varphi \equiv 1$, that 
\begin{equation}\label{eq:consequencepart3.6}
\begin{split}
    &\int_{\Omega} \big|\nabla \Delta\big( u^{\frac{n+p}{p}} \big)\big|^{p} \, \mathrm{d}x  \\
    &\leq \tilde{c} \int_{\Omega} \bigl(u^{n-2p} |\nabla u|^{3p}+ u^{n-p}|D^2u \nabla u|^p + u^{n-p}|\Delta u\nabla u|^p + u^n|\nabla \Delta u|^p\bigr)\, \mathrm{d}x\\
    &\leq c_\ast \left(\int_{\Omega} u^{n} |\nabla \Delta u|^{p} \, \mathrm{d}x +\int_{\Omega} |\nabla \big( u^{\frac{n+p}{3p}}\big)|^{3p} \, \mathrm{d}x + \int_{\Omega} u^{\frac{n-2p}{3}}|\Delta \big( u^{\frac{2(n+p)}{3p}} \big)\nabla u|^p\, \mathrm{d}x \right.\\
    &\quad + \left. \int_{\Omega} u^{\frac{n-2p}{3}}|D^2\big( u^{\frac{2(n+p)}{3p}} \big)\nabla u|^p\, \mathrm{d}x \right)\\
    &\leq c_\ast  \int_{\Omega} u^{n} |\nabla \Delta u|^{p} \, \mathrm{d}x
    +c_\ast  \left(\int_{\Omega} u^{n-2p}| \nabla u|^{3p}\, \mathrm{d}x\right)^{\frac{1}{3}}
    \left(\int_{\Omega} |\Delta \big( u^{\frac{2(n+p)}{3p}} \big)|^{\frac{3p}{2}}\, \mathrm{d}x\right)^{\frac{2}{3}}\\
    &\quad + c_\ast  \left(\int_{\Omega} u^{n-2p}| \nabla u|^{3p}\, \mathrm{d}x\right)^{\frac{1}{3}}\left(\int_{\Omega} |D^2\big( u^{\frac{2(n+p)}{3p}} \big)|^{\frac{3p}{2}}\, \mathrm{d}x\right)^{\frac{2}{3}}\\
    &\leq c_\ast  \int_{\Omega} u^{n} |\nabla \Delta u|^{p} \, \mathrm{d}x + c_\ast  \left(\int_{\Omega}u^{n} |\nabla \Delta u|^{p}\, \mathrm{d}x\right)^{\frac{1}{3}}
    \left(\int_{\Omega} |D^2\big( u^{\frac{2(n+p)}{3p}} \big)|^{\frac{3p}{2}}\, \mathrm{d}x\right)^{\frac{2}{3}},
\end{split}
\end{equation}
where $c_\ast  > 0$ is a generic constant that may change from line to line, but depends only on $n$, $p$, and $d$.
Due to the fact that  \(D^2\big(u^{\frac{2(n+p)}{3p}}\big) \in L_{\frac{3p}{2}}(\Omega;\R^{d\times d}) \), we deduce  that 
$\nabla \Delta\big( u^{\frac{n+p}{p}} \big) \in L_p(\Omega;\R^d)$. Therefore, applying again elliptic \(L_p\)-theory (see \cite[Thm. 2.5.5.1]{grisvard2011}), we conclude that \(u^{\frac{n+p}{p}}  \in W^3_p(\Omega)\). Furthermore, using \cite[Thm. 2.3.3.6]{grisvard2011}, we obtain the estimate
\begin{equation}\label{eq:consequencepart3.7}
 \begin{split}
     \| D^2\big( u^{\frac{2(n+p)}{3p}} \big)\|_{L_{\frac{3p}{2}}(\Omega)} 
     &\leq 
    c^\ast 
     \Bigl( \| \Delta\big( u^{\frac{2(n+p)}{3p}} \big)\|_{L_{\frac{3p}{2}}(\Omega)} + \|  u^{\frac{2(n+p)}{3p}} \|_{L_{\frac{3p}{2}}(\Omega)}  
     \Bigr) \\
     &\leq c^\ast \Bigl( \| \Delta\big( u^{\frac{2(n+p)}{3p}} \big)\|_{L_{\frac{3p}{2}}(\Omega)} + \|  \nabla \big(u^{\frac{2(n+p)}{3p}}\big) \|_{L_{\frac{3p}{2}}(\Omega)}\Bigr)
 \end{split}
 \end{equation}
for some generic constant $c^\ast>0$ depending on $n$, $p$, and $\Omega$. To establish inequality \eqref{eq:consequencepart3.1} we insert the elliptic estimate \eqref{eq:consequencepart3.7} into \eqref{eq:consequencepart3.6} and apply Young's inequality as well as the first two statements of the theorem. This leads to
\begin{equation*}
\begin{split}
    \int_{\Omega} \big|\nabla \Delta\big( u^{\frac{n+p}{p}} \big)\big|^{p} \, \dd x 
    &\leq
    c_\ast \left( \int_\Omega u^n |\nabla \Delta u|^p \dd x
    +
    \int_\Omega |\Delta \big(u^\frac{2(n+p)}{3p}\big)|^\frac{3p}{2} \dd x + \int_\Omega |\nabla \big(u^\frac{2(n+p)}{3p}\big)|^{\frac{3p}{2}} \dd x
    \right)
    \\
    &\leq
    c_\ast \int_\Omega u^n |\nabla \Delta u|^p \dd x.
\end{split}
\end{equation*}
Thus, the proof is complete.
\end{proof}

We can improve the estimates in Theorem \ref{thm:consequence} to localized versions as follows.

\begin{corollary} \label{cor:improve_weighted}
    Under the assumptions of Theorem \ref{thm:consequence} we have the additional weighted estimates
    \begin{equation*} 
        \int_{\Omega} \phi^{3p} \Big|\nabla \big(u^{\frac{n+p}{3p}}\big)\Big|^{3p} \dd x + 
        \int_{\Omega} \phi^{3p} \Big|\Delta\big(u^{\frac{2(n+p)}{3p}}\big)\Big|^{\frac{3p}{2}} \dd x 
        \leq 
        c_5 \left(\int_{\Omega} \phi^{3p} u^n|\nabla \Delta u|^p \dd x + \int_{\Omega} |\nabla \phi|^{3p} u^{n+p} \dd x\right)
    \end{equation*}
    for all non-negative test functions $\phi \in W^1_\infty(\Omega)$ satisfying $\int_\Omega |\nabla \phi|^{3p} u^{n+p}\, dx < \infty$, 
    and where $c_5$ is a positive constant depending only on $n$, $p$, and $d$.
\end{corollary}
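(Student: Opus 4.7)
The plan is to adapt the arguments of the first two parts of the proof of Theorem \ref{thm:consequence}, keeping the localization weight $\phi^{3p}$ inside every integral. Both estimates in the corollary reduce, via the chain rule and Jensen's inequality, to weighted integrals of the form $\int_\Omega \phi^{3p} u^{n-2p}|\nabla u|^{3p}\,\dd x$ and $\int_\Omega \phi^{3p} u^{n-\frac{p}{2}}|\Delta u|^{\frac{3p}{2}}\,\dd x$, which are precisely the quantities controlled by the weighted Bernis inequalities \eqref{eq:Bernis_new_I} and \eqref{eq:Bernis_new_III} from Theorem \ref{thm:Bernis}.

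For the first-order term, I would start from the pointwise identity $\nabla\bigl(u^{\frac{n+p}{3p}}\bigr)=\frac{n+p}{3p}u^{\frac{n-2p}{3p}}\nabla u$, so that
\begin{equation*}
    \int_\Omega \phi^{3p}\Bigl|\nabla\bigl(u^{\frac{n+p}{3p}}\bigr)\Bigr|^{3p}\dd x
    =\Bigl(\tfrac{n+p}{3p}\Bigr)^{3p}\int_\Omega \phi^{3p} u^{n-2p}|\nabla u|^{3p}\dd x.
\end{equation*}
Applying \eqref{eq:Bernis_new_I} on the right-hand side immediately yields the desired weighted estimate for the first term.

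For the second-order term, I would use
\begin{equation*}
    \Delta\bigl(u^{\frac{2(n+p)}{3p}}\bigr)
    =\tfrac{2(n+p)}{3p}\Bigl(\tfrac{2n-p}{3p} u^{\frac{2(n-2p)}{3p}}|\nabla u|^2 + u^{\frac{2n-p}{3p}}\Delta u\Bigr)
\end{equation*}
together with Jensen's inequality (exactly as in \eqref{eq:consequence5}) to obtain the pointwise bound $\bigl|\Delta(u^{\frac{2(n+p)}{3p}})\bigr|^{\frac{3p}{2}}\le c\bigl(u^{n-2p}|\nabla u|^{3p}+u^{n-\frac{p}{2}}|\Delta u|^{\frac{3p}{2}}\bigr)$ with $c=c(n,p)>0$. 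Multiplying by $\phi^{3p}$, integrating over $\Omega$, and applying both \eqref{eq:Bernis_new_I} and \eqref{eq:Bernis_new_III} yields the weighted bound for the second term. Adding the two bounds and taking $c_5$ to be the maximum of the resulting constants produces the stated inequality.

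The arguments above are essentially routine once Theorem \ref{thm:Bernis} is in hand, and no additional difficulty arises from the weight since the chain rule and Jensen step are purely pointwise. The reason the corollary does not include the third-order term $\int_\Omega \phi^{3p}|\nabla\Delta(u^{\frac{n+p}{p}})|^p\dd x$ is that in the proof of part three of Theorem \ref{thm:consequence} one invokes elliptic $L_p$-regularity (via \cite{grisvard2011}) to dominate $D^2(u^{\frac{2(n+p)}{3p}})$ by $\Delta(u^{\frac{2(n+p)}{3p}})$ and handle the mixed term $u^{\frac{n-p}{p}}|D^2u\,\nabla u|^p$; this step cannot be localized against an arbitrary $\phi$ without picking up boundary contributions and interior commutator terms, which is why the present weighted statement only covers the first- and second-order quantities, in agreement with the discussion in Remark~4.
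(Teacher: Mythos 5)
Your proof is correct and follows exactly the paper's approach: the paper's own remark after the corollary states that \enquote{the proof of these two estimates is already contained in the proof of Theorem~\ref{thm:consequence}}, and indeed the weighted versions appear verbatim as \eqref{eq:consequence1} and \eqref{eq:consequence3} there. Your explanation of why the third-order term resists localization also aligns with the discussion in Remark~\ref{rem:consequences}.
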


The proof of these two estimates is already contained in the proof of Theorem \ref{thm:consequence}.

\begin{remark}\label{rem:consequences}
In Corollary \ref{cor:improve_weighted}, we address the estimates for the first- and second-order terms. However, for the third-order term, we can only establish the following result. Assuming, in addition to the assumptions of Theorem \ref{thm:consequence}, that an elliptic estimate of the form
\begin{equation}\label{eq:additional_assumption}
    \int_\Omega \psi |D^2 v|^s \dd x
        \leq
        c_s \left(\int_\Omega \psi |\Delta v|^s \dd x
        +
        \int_\Omega |D^2 v|^{s-1} |\nabla \psi|\, |\nabla v| \dd x + \text{l.o.t.}\right),
\end{equation}
for $1 < s < \infty$ and suitable test functions $\psi \geq 0$, holds true for solutions $v$ to the Neumann problem
\begin{equation*}
    \begin{cases}
        -\Delta v + v = f \quad \text{in } \Omega
        \\
       \nabla v\cdot \nu = 0 \quad \text{on } \partial\Omega,
    \end{cases}
\end{equation*}
then we also get the weighted estimate
    \begin{equation} \label{eq:third-order-weighted}
        \int_{\Omega} \phi^{3p}  \Big|\nabla \Delta\big(u^{\frac{n+p}{p}}\big)\Big|^{p} \dd x 
        \leq 
        c_6 \left(\int_{\Omega} \phi^{3p} u^n|\nabla \Delta u|^p \dd x + \int_{\Omega} |\nabla \phi|^{3p} u^{n+p} \dd x\right)
    \end{equation}
for suitable non-negative test functions $\phi$ satisfying $\int_\Omega |\nabla \phi|^{3p} u^{n+p}\, dx < \infty$ and for some constant $c_6=c_6(n,p,d)>0$.
The authors are not aware whether an estimate of the form \eqref{eq:additional_assumption} holds true for the Neumann problem. 
For the Dirichlet case, weighted elliptic estimates are available (cf. \cite{CD2018,DST2008,DST2010}), provided the weight belongs to some Muckenhoupt class. In this case, the second integral on the right-hand side of \eqref{eq:additional_assumption} becomes superfluous. 
However, since the test functions $\phi$ in this paper might vanish on sets of positive measure, they do not belong to the Muckenhoupt class. As a result, \eqref{eq:additional_assumption}, if it holds at all, would require the additional integral involving lower-order terms. It is also possible that the test function might need $W^2_\infty$-regularity for \eqref{eq:additional_assumption} to be valid.
\end{remark}
\smallskip

\begin{proof}[Sketch of proof of \eqref{eq:third-order-weighted} under the assumption \eqref{eq:additional_assumption}. ]
If an estimate of the form \eqref{eq:additional_assumption} is satisfied, where we ignore the lower-order terms in the calculation, we can choose 
\begin{equation*}
    \psi = \varphi^{3p}, \quad 
    v = u^\frac{2(n+p)}{3p} 
    \quad \text{and} \quad 
    s = \frac{3p}{2}
\end{equation*}
and apply Hölder's and Young's inequalities to obtain
\begin{equation*}
    \begin{split}
        &\int_\Omega \varphi^{3p} |D^2(u^\frac{2(n+p)}{3p})|^\frac{3p}{2} \dd x
        \\
        &\leq
        c_p\int_\Omega \varphi^{3p} |\Delta(u^\frac{2(n+p)}{3p})|^\frac{3p}{2} \dd x
        +
        2c_p(n+p) \int_\Omega 
        \varphi^{3p-1} u^\frac{2n-p}{3p} |D^2(u^\frac{2(n+p)}{3p})|^\frac{3p-2}{2} |\nabla \varphi| |\nabla u| \dd x 
        \\
        &\leq
        c_p\int_\Omega \varphi^{3p} |\Delta(u^\frac{2(n+p)}{3p})|^\frac{3p}{2} \dd x\\
        &\quad + 
        2c_p(n+p)\left(\int_\Omega \varphi^{3p} |D^2(u^\frac{2(n+p)}{3p})|^\frac{3p}{2} \dd x \right)^\frac{3p-2}{3p}
        \left(\int_\Omega |\nabla \varphi|^{3p} u^{n+p} \dd x\right)^\frac{1}{3p}
        \left(\int_\Omega \varphi^{3p} u^{n-2p} |\nabla u|^{3p} \dd x\right)^\frac{1}{3p}
        \\
        &\leq
        c_p\int_\Omega \varphi^{3p} |\Delta(u^\frac{2(n+p)}{3p})|^\frac{3p}{2} \dd x
        + 
        \eps \int_\Omega \varphi^{3p} |D^2(u^\frac{2(n+p)}{3p})|^\frac{3p}{2} \dd x\\
        & \quad 
        +
        C_\eps \left(\int_\Omega |\nabla \varphi|^{3p} u^{n+p} \dd x
        +
        \int_\Omega \varphi^{3p} u^{n-2p} |\nabla u|^{3p} \dd x
        \right)
    \end{split}
\end{equation*}
for some $0 < \eps < 1$,  where $C_\eps>0$ depends only on $n$, $p$, and $\eps$. 
Absorbing the second term on the right-hand side in the left-hand side, estimating the first term by \eqref{eq:Bernis-inequality-more} and the fourth term by \eqref{eq:Bernis_new_I}, we obtain the weighted estimate 
\begin{equation}\label{eq:second-order-weighted}
 \int_\Omega \varphi^{3p} |D^2(u^\frac{2(n+p)}{3p})|^\frac{3p}{2} \dd x \leq  c_7 \left( \int_\Omega \varphi^{3p} u^{n} |\nabla \Delta u|^{p} \dd x + \int_\Omega |\nabla \varphi|^{3p} u^{n+p} \dd x
        \right)  
\end{equation}
with a constant $c_7 > 0$ depending on $n$, $p$, and $d$. Following the steps for \eqref{eq:consequencepart3.6} and using \eqref{eq:second-order-weighted} we finally obtain \eqref{eq:third-order-weighted}.
\end{proof}

\medskip

\section*{Acknowledgements}
\noindent
The authors would like to thank Jonas Jansen for fruitful discussions. 
\medskip

\section*{Funding}
\noindent 
KN was supported by ENW Vidi grant VI.Vidi.223.019 entitled 'Codimension two free boundary problems' of the Dutch Research Council NWO and by the APART-MINT Fellowship of the Austrian Academy of Sciences (\"OAW-APART-MINT Grant No. 12114)

\printbibliography

\end{document}